\newtheorem{theorem}{Theorem}[section]
\newtheorem{proposition}[theorem]{Proposition}
\newtheorem{lemma}[theorem]{Lemma}
\newtheorem{definition}[theorem]{Definition}
\newtheorem{remark}[theorem]{Remark}
\newtheorem{example}[theorem]{Example}
\newcommand{\lk}{\mbox{lk}}
\newcommand{\del}{\mbox{del}}
\author{Jacob A. White}
\title{Decision trees, monotone functions, and semimatroids}
\begin{document}

\maketitle

\begin{abstract}
We define decision trees for monotone functions on a simplicial complex. We define homology decidability of monotone functions, and show that various 
monotone functions related to semimatroids are homology decidable. Homology decidability is a generalization of semi-nonevasiveness, a 
notion due to Jonsson. The motivating example is the complex of bipartite graphs, whose Betti numbers are unknown in general.
 We show that these monotone functions have optimum decision trees, from which we can compute relative Betti numbers of related pairs of simplicial complexes. 
Moreover, these relative Betti numbers are coefficients of evaluations of the Tutte polynomial, and every semimatroid collapses onto its broken circuit complex.
\end{abstract}

\section{Introduction}

Let $f: 2^S \to \{0,1 \}$ be a function. We wish to find a decision tree for computing $f$, and to minimize the depth of this decision tree, which is always at most $|S|$. 
In the current paper, we consider decision trees for monotone functions $f: \Delta \to P$, for some simplicial complex $\Delta$ and some poset $P$. 
We extend the notion of evasiveness to this setting. 
We also consider the problem of finding an \emph{optimal} decision tree when $f$ is monotone increasing. 
In this case, an \emph{optimal} decision tree gives information about relative homology groups
$\widetilde{H}_i(\Delta_{\leq p}, \Delta_{< p})$, where $\Delta_{\leq p}$ are all the faces whose function value is at most $p$, and $\Delta_{< p}$ 
The philosophy of this paper is to extend combinatorial methods in the study of the topology of simplicial complexes, by replacing simplicial complexes with 
monotone functions $f: \Delta \to P$.

There are numerous papers involving cases where $f$ is a monotone graph property, such as being a disconnected graph \cite{vassiliev}, or being bipartite \cite{chari}.
A well-known fact is that nonevasive complexes are contractible. So several graph properties 
have been proved to be evasive by showing that $\Delta$ has nontrivial homology in some dimension. 
We refer the interested reader to the book \emph{Simplicial complexes of Graphs} \cite{jonsson}, 
by Jonsson, which provides an extensive survey of several such complexes, and what is known about their topology.
When the graph property is the property of being acyclic \cite{billera-provan}, 
bipartite \cite{chari}, or not connected \cite{vassiliev}, the complex is actually known to be homotopy equivalent to a wedge of spheres. 

Robin Forman \cite{formanmorse} noted that any decision tree can be used to give inequalities on the Betti numbers of $\Delta$. 
When the inequalities are actually equalities, then the complex is called \emph{semi-nonevasive} \cite{jonsson-decision}. 
Jonsson showed that the properties of being bipartite, disconnected, or acyclic are all semi-nonevasive \cite{jonsson}. 
Moreover, he showed that this same result holds for a large 
family of simplicial complexes which he calls \emph{pseudo-independence complexes}. 
However, no formulas are given for the Betti numbers at this level of generality, and computing these Betti numbers was the primary motivation for this paper.

As shown in Section 5, there is a bijection between strong pseudo-independence complexes and semimatroids.
Semimatroids were originally introduced by Ardila \cite{ardila}, who studied a Tutte polynomial for semimatroids.
We show that the only non-zero Betti number of a semimatroid is the constant term of the Tutte polynomial. 
For hyperplane arrangements, 
it is known that this constant term also gives the only nonzero Betti number of the broken circuit complex (see \cite{orlik-welker}, Section 1.5, for details and references).
In fact, every semimatroid collapses onto its broken circuit complex, and it was attempting to understand the collapses that led us to consider the notion of homological 
decidability for monotone functions, which is a generalization of semi-nonevasiveness, and is introduced in the next section.
We show that several functions on semimatroids, including the rank function and nullity function, are homology decidable, 
and that the relative Betti numbers of these functions can be computed from the Tutte polynomial.
Thus we obtain a topological interpretation for certain evaluations of the Tutte polynomial.

The paper is organized as follows: first, we study decision trees and homology decidability for monotone functions. This section includes Lemmas \ref{lem:morseinequalities} and \ref{lem:simple}, 
which are proven in Section 6.
In Section 3, we review semimatroids, 
and prove that the rank and nullity functions on semimatroids are homology decidable. In Section 4, we relate our results to the study of broken circuits. 
In Section 5, we show the equivalence between strong pseudo-independence complexes and semimatroids. 
In Section 6, we present a discrete Morse theory for monotone functions, which we use to prove Lemmas \ref{lem:morseinequalities} and \ref{lem:simple}. 
In Section 7, we conclude with some open problems.
In Sections 4 and 6, we assume the reader is familiar with vertex decomposability (due to Provan and Billera \cite{billera-provan}) and the theory of simplicial collapse. 
Vertex decomposability is Definition 13.29 in \cite{kozlov}, and 
cellular collapse is Definition 11.12 in \cite{kozlov}.

\section{Homological decidability for monotone functions}

\begin{definition}
An abstract simplicial complex $\Delta$ over a set $S$ is a collection of subsets 
of $S$ such that, if $\sigma \in \Delta$ and $\tau \subseteq \sigma$, then $\tau \in \Delta$. Each $S \in \Delta$ is a \emph{face}, and 
faces of size one are vertices.
\end{definition}
We do not assume that every element of $S$ is a vertex of $\Delta$.
Throughout this section, let $P$ be a fixed poset.
A function $f: \Delta \to P$ is \emph{monotone} if $f(\sigma) \leq f(\tau)$ whenever $\sigma \subseteq \tau \in \Delta$.
Since $\Delta$ is partially ordered by containment, $f$ is monotone if and only if it is order-preserving.
An example of a simplicial complex $\Delta$, a poset $P$, and a monotone function $f$ are given in Figure~\ref{fig:example}.

\begin{figure}
\begin{center}
\includegraphics[height=3cm]{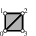}
\includegraphics[height=3cm]{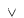}
\includegraphics[height=3cm]{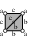}
\caption{A simplicial complex, a poset, and a monotone function (where $f(\emptyset) = a$)}
\label{fig:example}
\end{center}
\end{figure}

We extend the definition of link and deletion for a monotone function $f$. Given $\sigma \in \Delta$, recall that
$\lk_{\sigma}(\Delta) = \{\tau: \tau \cap \sigma = \emptyset, \tau \cup \sigma \in \Delta \}$ and $\del_{\sigma}(\Delta) = \{\tau \in \Delta: \sigma \not\subset \tau \}$ 
are the link and deletion of $\Delta$ with respect to $\sigma$. 
Define $f_{/ \sigma}: \lk_{\sigma}(\Delta) \to P$ by $f_{/ \sigma}(\tau) = f(\sigma \cup \tau)$. 
Similarly, define $f_{\setminus \sigma}: \del_{\sigma}(\Delta) \to P$ by $f_{\setminus \sigma}(\tau) = f(\tau)$. 
These define the deletion and link of $f$ with respect to $\sigma$.

Let $\Delta$ be a simplicial complex over $S$, and $f: \Delta \to P$ be a monotone function. 
An $f$-tree is a tree whose interior vertices are labeled with subsets of $S$, and whose leaves are 
labeled with elements of $P$, or with the symbol $N$ (corresponding to something not being in $\Delta$).

\begin{definition}
 Let $f: \Delta \to P$ be a monotone function. An $f$-tree $T$ is an element decision tree if it satisfies exactly one of the following:
\begin{enumerate}
 \item $T$ consists of only a single vertex, labeled by some $p \in P$, and $f$ is a constant function, whose value is $p$.
 \item The root of $T$ is labeled by an element $x \in S$, with $\{x \} \not\in \Delta$, the left subtree is a leaf labeled $N$, and the 
right subtree is an element decision tree for $f_{\setminus x}$.
 \item The root of $T$ is labeled by an element $x \in S$, with $\{x \} \in \Delta$, the left subtree $T_{\ell}$ is an element decision tree for $f_{/ x}$, 
and the right subtree $T_{r}$ is an element decision tree for $f_{\setminus x}$. 
\end{enumerate}
\end{definition}
By abuse of notation, we refer to an element decision tree as a decision tree.  
This definition is similar to the version given by Jonsson \cite{jonsson-decision}. 
Figure~\ref{fig:dectree} shows an example of a decision tree from the function given in Figure~\ref{fig:example}.

\begin{figure}
\begin{center}
\label{fig:dectree}
\includegraphics[height=5cm]{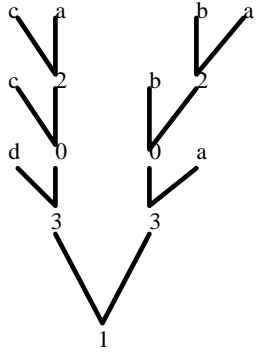}
\caption{A decision tree for $f$ in Figure~\ref{fig:example}}
\end{center}
\end{figure}

Let $T$ be an $f$-tree. Consider the following algorithm: given any set $\sigma$, 
query whether or not $x \in \sigma$, and recurse to right child of the root of $T$ if $x \in \sigma$. Otherwise, recurse to the left child. 
Upon reaching a leaf, return the value of the leaf. 
$T$ is a decision tree if and only if for every $\sigma$, either the returned value is $N$, and $\sigma \not\in \Delta$, or the returned value is $f(\sigma)$.

Given a decision tree $T$, and a face $\sigma \in \Delta$, $\sigma$ is $f(\sigma)$-evasive if we have to make $|S|$ queries 
before returning $f(\sigma)$.
For the decision tree in Figure~\ref{fig:dectree}, there are two $a$-evasive faces, one $b$-evasive face, and one $c$-evasive face. One could ask if there is a better 
decision tree, with no evasive faces.
Let $i \in \mathbb{N}$, and define $ev_{T}(f, p, i)$ to be the number of $p$-evasive faces $\sigma \in \Delta$ 
with $|\sigma| = i$. 
A monotone function $f$ is \emph{nonevasive} if it has no $p$-evasive faces, for all $p \in P$.

The next lemma is originally due to Forman \cite{formanmorse}, in the case where $f$ is constant. 
Given $p \in P$, let $\Delta_{\leq p} = \{ \sigma \in \Delta: f(\sigma) \leq p \}$ and define $\Delta_{< p}$ similarly. 
We also denote these complexes by $\Delta_{\leq p}^f$ and $\Delta_{< p}^f$ when there is some possibility for confusion.
\begin{lemma}[Morse Inequalities]
 \label{lem:morseinequalities}
Let $f: \Delta \to P$ be a monotone function, and let $T$ be a decision tree for $f$. 
Then for all $p \in P$ and $i \in \mathbb{N}$, we have 
\begin{equation}
 \label{eq:morseinequalities}
ev_{T}(f,p,i) \geq \widetilde{\beta}_{i+1}(\Delta_{\leq p}, \Delta_{< p})
\end{equation}
\end{lemma}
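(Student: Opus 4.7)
The plan is to use the decision tree $T$ to construct a partial matching $M$ on the face poset of $\Delta$ that pairs only faces of equal $f$-value. For each $p\in P$, the restriction of $M$ to the stratum $\{f=p\}$ will be an acyclic matching on the relative chain complex $C_*(\Delta_{\leq p},\Delta_{<p})$, whose unmatched faces of cardinality $i$ are precisely the $p$-evasive faces counted by $ev_T(f,p,i)$. The discrete Morse theory for monotone functions established in Section 6 will then deliver
\[
ev_T(f,p,i)\;\geq\;\widetilde{\beta}_{i+1}(\Delta_{\leq p},\Delta_{<p}).
\]

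I construct $M$ by induction on $T$. Let $x$ be the label at the root. If $\{x\}\notin\Delta$, I take $M$ to be the matching supplied inductively by the right subtree, which is a decision tree for $f_{\setminus x}=f$. If $\{x\}\in\Delta$, I first add to $M$ the \emph{root-level pairs} $\{\tau,\tau\cup\{x\}\}$ for every $\tau\in\del_x(\Delta)\cap\lk_x(\Delta)$ satisfying $f(\tau)=f(\tau\cup\{x\})$ — that is, the pairs in which the insertion of $x$ leaves the $f$-value unchanged. The remaining unmatched faces split according to whether or not they contain $x$: on faces containing $x$, I use the matching given inductively by the left subtree $T_\ell$ (a decision tree for $f_{/x}$), transported via $\tau\leftrightarrow\tau\cup\{x\}$; on faces not containing $x$, I use the matching from the right subtree $T_r$ (a decision tree for $f_{\setminus x}$).

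Three things need verification. First, $M$ preserves $f$-value by construction, since the root-level pairs equate $f$-values and, by the inductive hypothesis, so do the subtree matchings. Second, $M$ is acyclic: the only pairs of $M$ that cross the $x$-direction in the Hasse diagram are root-level pairs, while the subtree matchings live entirely within $\lk_x(\Delta)$ or $\del_x(\Delta)$, so any hypothetical alternating cycle would have to consist entirely of root-level pairs and entirely of covers within one of the two subcomplexes, which is impossible. Third, by tracing the decision-tree algorithm one sees that a face $\sigma$ remains unmatched in $M$ if and only if, at every node of $T$ the algorithm visits, the corresponding root-level $x$-flip is blocked (either because $x$ is not a vertex, or because $f$ jumps strictly when $x$ is inserted), which is exactly the condition that the algorithm must query all $|S|$ elements before returning $f(\sigma)$ — i.e.\ that $\sigma$ is $f(\sigma)$-evasive.

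The main obstacle is the critical-cell bookkeeping in the third step: one has to check that the recursive assignment of leftover faces to the two subtrees is compatible with the notion of evasiveness defined globally on $T$, and that no face escapes both a root-level pair and its subtree's pair by accident. This amounts to a careful case analysis around the failure cases $f(\tau)<f(\tau\cup\{x\})$ and $\tau\cup\{x\}\notin\Delta$, after which an induction on $|S|$ closes both acyclicity and the critical-cell identification simultaneously. Once these are in hand, the discrete Morse inequality for monotone functions from Section 6 applies to the stratum $\{f=p\}$ and produces \eqref{eq:morseinequalities}.
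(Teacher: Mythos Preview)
Your construction has a genuine gap: the identification ``unmatched in $M$ $\Leftrightarrow$ evasive in $T$'' is false, and in fact your root-level matching is not the structure encoded by the decision tree at all.

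Take $S=\{a,b\}$, $\Delta=2^S$, and $f(\sigma)=1$ if $a\in\sigma$, $f(\sigma)=0$ otherwise. Let $T$ have root $b$; since $f_{/b}$ and $f_{\setminus b}$ are both nonconstant, each subtree must query $a$, so every face requires $|S|=2$ queries and all four faces are evasive. Your root-level step, however, matches $\tau$ with $\tau\cup\{b\}$ whenever $f(\tau)=f(\tau\cup\{b\})$, which holds for every $\tau$; hence $M$ has no unmatched faces. Four evasive faces, zero critical cells. The claimed bijection fails, and with it your third verification step. (Evasiveness in $T$ is about the depth at which the sub-function becomes constant, not about whether an $x$-flip changes the $f$-value.)

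There is a second problem: your recursion is not well-defined. The inductive matching from $T_r$ is a matching on all of $\del_x(\Delta)$, but you only want to apply it to the faces left over after root-level matching. Restricting a matching to a subset need not yield a matching: a leftover face may be paired by $T_r$ with a face already consumed at root level.

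The paper's argument avoids all of this because it never introduces the root-level $f$-value test. The leaves of $T$ already partition $\Delta$ into boolean intervals (this is the lemma preceding Theorem~\ref{thm:fundthm}), and $f$ is \emph{automatically} constant on each interval, since every face landing at a given leaf is assigned that leaf's label. The evasive faces are exactly the singleton intervals. Theorem~\ref{thm:fundthm} then collapses the non-singleton intervals to produce a cell complex $X$ with one $i$-cell of value $p$ for each element of $Ev_{T,p,i}$, homotopy equivalent to $\Delta$ in a way that respects the filtration by $P$; the inequality $ev_T(f,p,i)\geq\widetilde\beta_{i+1}(\Delta_{\leq p},\Delta_{<p})$ is then just the trivial bound of Betti numbers by the number of relative cells. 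If you want a matching formulation, match within each boolean interval; do not add a separate $f$-value-preserving layer at the root.
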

In particular, if at least one of the relative homology groups for one of the pairs $(\Delta_{\leq p}, \Delta_{< p})$ is non-trivial, then 
$f$ is evasive. For the monotone function appearing in Figure~\ref{fig:example}, $\Delta_{\leq a}$ is homotopy equivalent to 3 disjoint points, 
and so $\widetilde{\beta}_0(\Delta_{\leq a}) = 2$. Thus,
$f$ is evasive. Moreover, $\widetilde{\beta}_0(\Delta_{\leq b}, \Delta_{< b}) = 1 = \widetilde{\beta}_0(\Delta_{\leq c}, \Delta_{< c})$, 
so the Morse inequalities are exact.

\begin{definition}
  A decision tree $T$ for a monotone function $f: \Delta \to P$ is \emph{optimal} if equality holds in \eqref{eq:morseinequalities} for all $p \in P$, $i \in \mathbb{N}$. 
If $f$ has an optimal decision tree, then $f$ is \emph{homology decidable}.
\end{definition}
There is a recursive definition for homology decidable:
\begin{proposition}
Let $f: \Delta \to P$ be monotone. Then $f$ is homology decidable if and only if it satisfies exactly one of the following conditions:
\begin{enumerate}
\item $f$ is constant, and $\Delta$ is a simplex,
\item there exists $x \in S$ with $\{x\} \not\in \Delta$, and $f_{\setminus x}$ is homology decidable.
\item there exists a vertex $x$ (called a shedding vertex) such that $f_{\setminus x}$ and $f_{/x}$ are both homology decidable, and
$$\widetilde{H}_d(\Delta_{\leq p}^f, \Delta_{< p}^f) \simeq \widetilde{H}_d(\Delta_{\leq p}^{f_{\setminus x}},\Delta_{< p}^{f_{\setminus x}}) \oplus \widetilde{H}_{d-1}(\Delta_{\leq p}^{f_{/x}}, \Delta_{< p}^{f_{/x}})$$ for all $d \geq -1$, $p \in P$.
\end{enumerate}
\end{proposition}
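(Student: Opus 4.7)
The plan is to unpack both directions using the recursive definition of an element decision tree and to combine the Morse Inequalities (Lemma~\ref{lem:morseinequalities}) with the Mayer--Vietoris long exact sequences coming from the star decomposition of $\Delta_{\leq p}^f$ and $\Delta_{<p}^f$ at a common vertex $x$.

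For the forward direction, let $T$ be an optimal decision tree for $f$. The definition of a decision tree forces $T$ into one of three syntactic forms. If $T$ is a single leaf, then the algorithm accepts every $\sigma \subseteq S$ with the same value, so $\Delta = 2^S$ and $f$ is constant, giving case (1). If the root of $T$ is labeled by $x$ with $\{x\} \not\in \Delta$, then $\del_x(\Delta) = \Delta$ and $f_{\setminus x} = f$, so the right subtree is a decision tree for $f_{\setminus x}$ with the same evasive-face counts as $T$; it is optimal and gives case (2). In the remaining case, where the root $x$ is a vertex of $\Delta$, the faces of $\Delta$ split according to whether they contain $x$. Using the bijection $\sigma \leftrightarrow \sigma \setminus \{x\}$ between faces of $\Delta$ containing $x$ and faces of $\lk_x(\Delta)$, and tracking the number of queries on runs of the algorithm, I obtain the additivity identity
\begin{equation*}
ev_T(f,p,i) = ev_{T_r}(f_{\setminus x}, p, i) + ev_{T_\ell}(f_{/x}, p, i-1).
\end{equation*}
The star decompositions $\Delta_{\leq p}^f = \del_x(\Delta_{\leq p}^f) \cup \mbox{star}_x(\Delta_{\leq p}^f)$ and the analogous one for $\Delta_{<p}^f$ each yield a Mayer--Vietoris long exact sequence in which the star term has vanishing reduced homology, and naturality of these sequences under the inclusion $\Delta_{<p}^f \hookrightarrow \Delta_{\leq p}^f$ produces a long exact sequence of relative groups
\begin{equation*}
\cdots \to \widetilde{H}_n(\Delta_{\leq p}^{f_{/x}}, \Delta_{<p}^{f_{/x}}) \to \widetilde{H}_n(\Delta_{\leq p}^{f_{\setminus x}}, \Delta_{<p}^{f_{\setminus x}}) \to \widetilde{H}_n(\Delta_{\leq p}^{f}, \Delta_{<p}^{f}) \to \widetilde{H}_{n-1}(\Delta_{\leq p}^{f_{/x}}, \Delta_{<p}^{f_{/x}}) \to \cdots,
\end{equation*}
and hence the Betti-number inequality $\widetilde{\beta}_{i+1}(f) \leq \widetilde{\beta}_{i+1}(f_{\setminus x}) + \widetilde{\beta}_i(f_{/x})$. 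Combining the additivity identity with the Morse Inequalities for each subtree and the assumed optimality of $T$ forces each inequality to an equality. Thus $T_r$ and $T_\ell$ are optimal, every connecting map in the long exact sequence vanishes, and the resulting short exact sequences split to yield the asserted direct-sum decomposition.

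For the backward direction I would, in each case, assemble a decision tree from the recursive data and verify optimality via the Morse Inequalities. In case (1) the single-leaf tree suffices because each $\Delta_{\leq p}^f$ is either empty or the full simplex $2^S$, so every relative Betti number vanishes. In case (2) I attach an $N$-leaf on the left and the given optimal tree for $f_{\setminus x}$ on the right; the resulting $T$ has the same evasive-face counts and the same pairs of complexes as $f_{\setminus x}$, so it is optimal. In case (3), combining the optimal trees for $f_{/x}$ and $f_{\setminus x}$ into a tree rooted at $x$ and applying the additivity identity together with the direct-sum hypothesis gives
\begin{equation*}
ev_T(f,p,i) = \widetilde{\beta}_{i+1}(f_{\setminus x}) + \widetilde{\beta}_i(f_{/x}) = \widetilde{\beta}_{i+1}(f),
\end{equation*}
as needed.

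The main obstacle is in the forward direction, in passing from equality of ranks in the Mayer--Vietoris sequence to an actual splitting of the short exact sequences on the level of relative homology groups. Over a field the splitting is automatic, but in the integer case one must produce an explicit splitting; I would aim to construct this by exploiting the fact that the star containing $x$ is a cone on $\lk_x$, so $\lk_x$ retracts through the star to $\del_x$.
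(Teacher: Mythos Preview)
The paper states this Proposition without proof, so there is nothing to compare against. Your argument is the natural one and is essentially correct: the additivity of evasive-face counts, the Morse inequalities for the subtrees, and the relative Mayer--Vietoris bound combine exactly as you describe to force optimality of $T_\ell$ and $T_r$ and equality of Betti numbers.

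The obstacle you flag at the end has a clean resolution already implicit in Section~\ref{sec:discmorse}. Apply Theorem~\ref{thm:fundthm} to the optimal tree $T$: the pair $(\Delta_{\leq p}^f,\Delta_{<p}^f)$ is homotopy equivalent to a CW pair $(X_{\leq p},X_{<p})$ whose relative cellular chain group in each degree is free abelian of rank $ev_T(f,p,i)$. Optimality says this rank equals the corresponding Betti number in every degree, and in a chain complex of finitely generated free abelian groups that forces every differential to have rank zero and hence to vanish (the image of such a map is free, and a free group of rank zero is trivial). Thus $\widetilde H_*(\Delta_{\leq p}^f,\Delta_{<p}^f)$ is free abelian whenever $f$ is homology decidable, and the same holds for $f_{\setminus x}$ and $f_{/x}$ once you know $T_r$ and $T_\ell$ are optimal. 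With all three groups free, the rank identity you proved already yields the asserted isomorphism; equivalently, in the short exact sequence extracted from Mayer--Vietoris the right-hand term is a subgroup of a free abelian group, hence free, so the sequence splits. No geometric retraction is required.

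One incidental remark: the word ``exactly'' in the statement is not quite accurate (a constant function on a nonempty simplex satisfies both (1) and (3)); this is a flaw in the statement rather than in your proof.
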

When $f$ is constant, we recover the notion of semi-nonevasiveness due to Jonsson \cite{jonsson-decision}. 
as shown in Example \ref{ex:badexample}, there are monotone functions $f: \Delta \to P$ which have optimal decision trees, but for which every face of $\Delta$ is $p$-evasive for some $p \in P$. Motivated by this example, and 
the fact that the Betti numbers are computed from a decision tree, we have chosen the name `homology decidability'.

There is a sufficient condition for a function to be homology decidable.
The proof relies on the Fundamental Theorem of Discrete Morse Theory for monotone functions, and is given in Section~\ref{sec:discmorse}. 
\begin{lemma}
\label{lem:simple}
Let $f: \Delta \to P$ be monotone, with a decision tree $T$. 
Suppose there exists a function $d: P \to \mathbb{N}$ such that, 
if $\sigma$ is $p$-evasive, then $|\sigma| = d(\sigma)$.
Then $T$ is optimal, and $f$ is homology decidable.
\end{lemma}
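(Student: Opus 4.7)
The plan is to combine the Morse inequalities of Lemma~\ref{lem:morseinequalities} with a fiberwise reduced Euler characteristic computation.

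Fix $p \in P$. By hypothesis, every $p$-evasive face has size $d(p)$, so $ev_T(f,p,i) = 0$ for every $i \neq d(p)$. Plugging into Lemma~\ref{lem:morseinequalities} immediately forces $\widetilde{\beta}_{i+1}(\Delta_{\leq p}, \Delta_{<p}) = 0$ for every $i \neq d(p)$; in other words, at most one relative Betti number of the pair $(\Delta_{\leq p}, \Delta_{<p})$, namely $\widetilde{\beta}_{d(p)+1}$, can be non-zero.

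Next I would evaluate the reduced Euler characteristic of the pair in two ways. Homologically, the vanishing above collapses $\widetilde{\chi}(\Delta_{\leq p}, \Delta_{<p}) = \sum_i (-1)^i \widetilde{\beta}_i(\Delta_{\leq p},\Delta_{<p})$ to the single term $(-1)^{d(p)+1}\,\widetilde{\beta}_{d(p)+1}(\Delta_{\leq p}, \Delta_{<p})$. Combinatorially, the relative chain complex is generated by the faces with $f$-value exactly $p$, so
\[
\widetilde{\chi}(\Delta_{\leq p}, \Delta_{<p}) = \sum_{\sigma \in \Delta,\ f(\sigma)=p} (-1)^{|\sigma|-1}.
\]
To prune this alternating sum down to the evasive faces I would invoke the discrete Morse theory for monotone functions developed in Section~6: the decision tree $T$ induces an acyclic matching on $\Delta$ whose matched pairs consist of faces of consecutive cardinality sharing an $f$-value, and whose critical faces are exactly the evasive ones. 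The matched pairs cancel in the alternating sum, so by the hypothesis $\widetilde{\chi}(\Delta_{\leq p}, \Delta_{<p}) = (-1)^{d(p)-1}\, ev_T(f,p,d(p))$.

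Equating the two expressions for $\widetilde{\chi}$ and noting that the signs agree, I obtain $ev_T(f,p,d(p)) = \widetilde{\beta}_{d(p)+1}(\Delta_{\leq p}, \Delta_{<p})$. This is equality in~\eqref{eq:morseinequalities} at the unique index $i=d(p)$ where either side can be non-zero; for every other $i$ both sides of~\eqref{eq:morseinequalities} already vanish. Hence $T$ is optimal and $f$ is homology decidable. The main obstacle is the matching step: one must verify that the decision tree really induces an acyclic matching whose matched pairs remain inside a single fiber $f^{-1}(p)$, so that the cancellation is genuinely fiberwise. This is where monotonicity of $f$ is essential and where the Fundamental Theorem of Discrete Morse Theory for monotone functions set up in Section~6 must do the real work; once that tool is in hand, the Euler-characteristic bookkeeping above closes the argument with no further difficulty.
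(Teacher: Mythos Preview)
Your argument is correct, but it takes a longer route than the paper's. The paper appeals directly to Theorem~\ref{thm:fundthm}: the Morse cell complex $X$ satisfies $(\Delta_{\leq p},\Delta_{<p})\simeq (X_{\leq p},X_{<p})$, and the relative chain group $C_i(X_{\leq p},X_{<p})$ has exactly $ev_T(f,p,i)$ generators. Under the hypothesis these chain groups are concentrated in a single degree, so the relative homology equals the chain group and equality in~\eqref{eq:morseinequalities} is immediate---one line, no Euler characteristics.

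Your approach instead uses Lemma~\ref{lem:morseinequalities} to kill all but one Betti number, and then matches that surviving Betti number against the evasive count via an Euler-characteristic computation. This works, but note two things. First, it is slightly circular in spirit: Lemma~\ref{lem:morseinequalities} is itself proved from Theorem~\ref{thm:fundthm}, so you are invoking the Fundamental Theorem twice (once through the Morse inequalities, once through the matching) where the paper invokes it once. Second, for the Euler-characteristic step you do not actually need acyclicity of the matching at all---the bare partition of each fiber $f^{-1}(p)$ into boolean intervals (Lemma~6.2) already makes the non-singleton intervals cancel in $\sum_{f(\sigma)=p}(-1)^{|\sigma|-1}$. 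So the ``main obstacle'' you flag is lighter than you suggest; the real dependence on Section~6 in your argument enters through the Morse inequalities, not through the cancellation step.
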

In other words, the dimension of an evasive face $\sigma$ depends only on $f(\sigma)$.

\begin{example}
\label{ex:badexample}
Let $\Delta$ be a simplicial complex, and define $f: \Delta \to \mathbb{N}$ by $f(\sigma) = |\sigma|$. The only way to determine $f(\sigma)$ is to query every vertex of $\Delta$, so 
every face is evasive. Applying Lemma~\ref{lem:simple} to $d = f$, we see that $f$ is homology decidable.
\end{example}

We end this section by defining the \emph{Poincar\'e polynomial} for a monotone function $f: \Delta \to \mathbb{N}$. It is given by 
\begin{equation}
 \label{eq:poincare}
P(f; q,x) = \sum_{i \geq 0, j \geq 0} \widetilde{\beta}_i(\Delta_{\leq j}, \Delta_{< j}) x^jq^i 
\end{equation}

\section{Semimatroids}
Semimatroids were originally introduced by Ardila \cite{ardila}, and form a generalization of affine hyperplane arrangements. Ardila studied 
the Tutte polynomial for semimatroids, and computed them for various affine hyperplane arrangements \cite{ardila-tutte}. The goal of this section is to show that many monotone functions on
semimatroids are semi-nonevasive, and their Poincar\'e polynomials are given by evaluations of the Tutte polynomial. 
The background material in this section, including Theorem 3.4, comes from \cite{ardila}. Our original contribution is Theorem \ref{thm:main}.

\begin{definition} A \emph{semimatroid} is a triple $\mathcal{C} = (S, \Delta, r)$ where $\Delta$ is a non-void simplicial complex over $S$, 
and $r: \Delta \to \mathbb{N}$ such that we have the following, for any $\sigma, \tau \in \Delta$:
\begin{enumerate}
\item $0 \leq r(\sigma) \leq |\sigma|$.
\item If $\sigma \subseteq \tau$, then $r(\sigma) \leq r(\tau)$.
\item If $\sigma \cup \tau \in \Delta$, then $r(\sigma) + r(\tau) \geq r(\sigma \cup \tau) + r(\sigma \cap \tau)$.
\item If $r(\sigma) = r(\sigma \cap \tau)$, then $\sigma \cup \tau \in \Delta$.
\item If $r(\sigma) < r(\tau)$, then there exists $y \in \tau \setminus \sigma$ such that $\sigma \cup \{y \} \in \Delta$.
\end{enumerate}
\end{definition}

\begin{example}
Let $\mathcal{H}$ be a collection of affine hyperplanes in $\mathbb{R}^n$. Then $\{H_1, \ldots, H_k \}$ is intersecting if $\cap_{i=1}^k H_i \neq \emptyset$. 
Let $\Delta(\mathcal{H})$ be the collection of all intersecting sets of hyperplanes. Given an intersecting set $\sigma$, define $r(\sigma) = n - \dim(\cap \sigma)$, the codimension of the intersection of the hyperplanes. 
Then $(\mathcal{H}, \Delta(\mathcal{H}), r)$ is a semimatroid.
\end{example}

Now we recall the definition of Tutte polynomial of a semimatroid $\mathcal{C}$.
A set $X \in \Delta$ is \emph{dependent} if $r(X) < |X|$, and \emph{independent} otherwise. A \emph{circuit} is a minimal dependent set. A maximal independent set is a basis, and every basis has the same size, which is 
denoted $r_{\mathcal{C}}$, the rank of the semimatroid. A circuit of size $1$ is called a \emph{loop}, and an element of $S$ that is in every basis is called a \emph{coloop}.

\begin{definition}
 
The Tutte polynomial of a semimatroid $\mathcal{C}$ is the polynomial $$T_{\mathcal{C}}(x,y) = \sum_{\sigma \in \Delta} (x-1)^{r_{\mathcal{C}} - r(\sigma)} (y-1)^{|\sigma| - r(\sigma)}$$
\end{definition}

There is also a deletion-contraction recurrence for $T_{\mathcal{C}}$, due to Ardila \cite{ardila}. Given $e \in \mathcal{C} = (S, \Delta, r)$, define $\mathcal{C} - e = (S \setminus \{e\}, \del_{e}(\Delta), r_{\setminus e})$ 
to be the deletion. Similarly, define $\mathcal{C}/e = (S \setminus \{e \}, \lk_{e}(\Delta), r')$ where $r'(\sigma) = r(\sigma + e) - r(e)$.  
\begin{theorem}[\cite{ardila}, Proposition 8.2]
 
Let $\mathcal{C}$ be a semimatroid, let $e \in S$. Then $T_{\mathcal{C}}$ satisfies the following recurrence:
\begin{enumerate}
\item $T_{\emptyset} = 1$,
 \item $T_{\mathcal{C}} = T_{\mathcal{C} - e}$ if $\{e \} \notin \Delta$,
\item $T_{\mathcal{C}} = yT_{\mathcal{C} - e}$ if $e$ is a loop,
\item $T_{\mathcal{C}} = xT_{\mathcal{C} / e}$ if $e$ is a coloop,
\item $T_{\mathcal{C}} = T_{\mathcal{C} - e} + T_{\mathcal{C} / e}$ otherwise.
\end{enumerate}

\end{theorem}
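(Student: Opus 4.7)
The plan is to verify each case of the recurrence by direct manipulation of the sum defining $T_{\mathcal{C}}$, partitioning $\Delta$ into the faces avoiding $e$ (which form $\del_e(\Delta)$) and the faces containing $e$ (which biject with $\lk_e(\Delta)$ via $\sigma \mapsto \sigma \setminus \{e\}$). Case (1) is immediate from the definition. For the other cases, the strategy is to rewrite each summand in terms of rank data for $\mathcal{C}-e$ or $\mathcal{C}/e$ and recognize the resulting sums as $T_{\mathcal{C}-e}$ or $T_{\mathcal{C}/e}$.

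Three structural facts drive the computation. First, $r_{\mathcal{C}-e} = r_{\mathcal{C}}$ unless $e$ is a coloop, in which case $r_{\mathcal{C}-e} = r_{\mathcal{C}} - 1$; this is immediate from the definition of coloop and basis. Second, for $\tau \in \lk_e(\Delta)$, the identities $r(\tau \cup \{e\}) = r'(\tau) + r(\{e\})$ and $r_{\mathcal{C}/e} = r_{\mathcal{C}} - r(\{e\})$ hold by the definition of the contraction. Third, when $e$ is a loop or a coloop, $\del_e(\Delta) = \lk_e(\Delta)$. For a loop, this follows from axiom (4) applied with $\sigma = \{e\}$, since $r(\{e\}) = 0 = r(\emptyset) = r(\{e\} \cap \tau)$. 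For a coloop, I would extend any $\tau \in \del_e(\Delta)$ to a maximal face $\sigma$ of $\Delta$; axiom (5) then forces $r(\sigma) = r_{\mathcal{C}}$, so $\sigma$ contains a basis, which by definition must contain $e$, giving $\tau \cup \{e\} \in \Delta$.

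Granted these facts, each case reduces to a short computation. In Case (2), no face of $\Delta$ contains $e$, so the sum is literally $T_{\mathcal{C}-e}$. In Case (3), summing over $\sigma \ni e$ picks up an extra $(y-1)$ factor from $|\sigma| - r(\sigma) = |\tau| + 1 - r(\tau)$, yielding $T_{\mathcal{C}} = (1 + (y-1))\, T_{\mathcal{C}-e} = y T_{\mathcal{C}-e}$. In Case (4), summing over $\sigma \not\ni e$ picks up an extra $(x-1)$ factor from the rank shift $r_{\mathcal{C}} = r_{\mathcal{C}-e} + 1$, and the equality $T_{\mathcal{C}-e} = T_{\mathcal{C}/e}$ (which holds because the two semimatroids have identical faces and ranks when $e$ is a coloop) gives $T_{\mathcal{C}} = x T_{\mathcal{C}/e}$. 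Case (5) is the cleanest: neither the loop nor coloop correction applies, so the two halves of the partition literally are $T_{\mathcal{C}-e}$ and $T_{\mathcal{C}/e}$.

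The main obstacle is the coloop case of the third structural fact, namely that a maximal face of $\Delta$ necessarily has rank $r_{\mathcal{C}}$ and hence contains a basis. This requires an augmentation argument using axioms (3) and (5), because axiom (5) only promises that a single element can be added without leaving $\Delta$, not that rank strictly increases; one controls this by combining axiom (5) with the submodularity of axiom (3) to preclude an infinite chain of rank-preserving extensions of a maximal face.
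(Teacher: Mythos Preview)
The paper does not give its own proof of this statement: it is quoted as Proposition~8.2 of Ardila \cite{ardila} and used as background. Your direct verification from the defining sum, splitting $\Delta$ into $\del_e(\Delta)$ and $\{\sigma: e\in\sigma\}\cong\lk_e(\Delta)$, is the standard argument and is essentially correct.

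One small correction in the coloop case. You identify the right obstacle but locate it slightly off. That a \emph{maximal} face $\sigma$ has $r(\sigma)=r_{\mathcal{C}}$ follows from axiom~(5) alone: comparing $\sigma$ with any basis $B$, an element promised by~(5) would contradict maximality, and there is no ``infinite chain'' issue here. Where submodularity (axiom~(3)) is genuinely needed is the next step, that a face of full rank contains a basis, and, equivalently for your identity $T_{\mathcal{C}-e}=T_{\mathcal{C}/e}$, that $r(\tau+e)=r(\tau)+1$ for every $\tau\in\del_e(\Delta)$. For this one takes a maximal independent $I\subseteq\tau$, uses~(3) to show $r(I)=r(\tau)$, and then observes that if $r(\tau+e)=r(\tau)$ the set $I+e$ is dependent, so any basis extending $I$ (whose existence again uses~(3) and~(5)) omits $e$, contradicting the coloop hypothesis. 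Once this is pinned down your computation in each case goes through verbatim.
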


Our main theorem is the following:
\begin{theorem}
\label{thm:main}
Let $\mathcal{C} = (S, \Delta, r)$ be a semimatroid, with $S \neq \emptyset$. 
\begin{enumerate}
\item Let $r: \Delta \to \mathbb{N}$ be the rank function. Then $r$ is homology decidable, and $P(r; q,x) = (qx)^{r_{\mathcal{C}}}T_{\mathcal{C}}(\frac{qx+1}{qx}, 0) $.
\item Let $n: \Delta \to \mathbb{N}$ be given by $n(\sigma) = |\sigma| - r(\sigma)$. This is the nullity function. Then $n$ is monotone and homology decidable. Moreover, $P(n; q,x) = (q)^{r_{\mathcal{C}}}T_{\mathcal{C}}(0, qx+1)$.
\item $\Delta$ is semi-nonevasive, and $P(\Delta; q) = T_{\mathcal{C}}(0,0)q^{r_{\mathcal{C}}}$.
\end{enumerate}

\end{theorem}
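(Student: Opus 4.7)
The plan is to prove all three claims by induction on $|S|$, with the case analysis mirroring the Tutte polynomial recurrence in Theorem 3.4. When $\Delta$ equals the full simplex $2^S$ (equivalently, every element of $S$ is a coloop), all three conclusions are immediate: $\Delta$ is contractible, the sublevel sets for $r$ and $n$ admit trivial decision trees, and $T_{\mathcal{C}}(0, 0) = 0$. Otherwise, I can always select a non-coloop $e \in S$, and the induction proceeds with $e$ as the shedding vertex.

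For parts (1) and (2), I first identify $f_{\setminus e}$ with the rank (respectively nullity) function on $\mathcal{C} - e$ and $f_{/e}(\tau) = f(\tau \cup \{e\})$ as a constant shift of the corresponding function on $\mathcal{C}/e$, by $r(\{e\})$ or $n(\{e\}) \in \{0, 1\}$. The inductive hypothesis then makes both $f_{\setminus e}$ and $f_{/e}$ homology decidable on the smaller semimatroids, so by the recursive characterization of homology decidability (Proposition 2.5), it remains to verify the direct sum decomposition of relative homology and to match the Poincar\'e polynomials with the Tutte specializations.

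The main obstacle is establishing the direct sum
\[
\widetilde{H}_d(\Delta^f_{\leq p}, \Delta^f_{<p}) \;\cong\; \widetilde{H}_d(\Delta^{f_{\setminus e}}_{\leq p}, \Delta^{f_{\setminus e}}_{<p}) \;\oplus\; \widetilde{H}_{d-1}(\Delta^{f_{/e}}_{\leq p}, \Delta^{f_{/e}}_{<p}).
\]
I would derive it from the star/deletion decomposition $\Delta^f_{\leq p} = \del_e(\Delta^f_{\leq p}) \cup \overline{\mathrm{st}}_e(\Delta^f_{\leq p})$, whose intersection is $\lk_e(\Delta^f_{\leq p})$ and whose star piece is contractible. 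Because $e$ is a non-coloop, the sublevel sets are vertex decomposable with $e$ as a shedding vertex, which is exactly what is needed for the associated Mayer--Vietoris long exact sequence to split into the direct sum. In the loop case the argument degenerates pleasantly: $\Delta^r_{\leq p}$ is itself a cone with apex $e$, so its relative homology vanishes, matching $T_{\mathcal{C}}(X, 0) = 0$. The Poincar\'e polynomial then obeys the recurrence $P_{\mathcal{C}}(f; q, x) = P_{\mathcal{C} - e}(f; q, x) + q\, x^{\epsilon_f}\, P_{\mathcal{C}/e}(f; q, x)$ with $\epsilon_f \in \{0, 1\}$ absorbing the shift, and a routine comparison with the Tutte recurrence at $(\frac{qx+1}{qx}, 0)$ or $(0, qx+1)$ completes parts (1) and (2).

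Part (3) follows from a parallel induction on $|S|$: picking a non-coloop $e$ and invoking vertex decomposability of the ambient $\Delta$ itself yields $\widetilde{H}_i(\Delta) \cong \widetilde{H}_i(\del_e \Delta) \oplus \widetilde{H}_{i-1}(\lk_e \Delta)$. Combined with the Tutte recurrence and the rank identities ($r_{\mathcal{C}} = r_{\mathcal{C} - e} = r_{\mathcal{C}/e} + 1$ in the generic case), this yields $P(\Delta; q) = T_{\mathcal{C}}(0, 0) q^{r_{\mathcal{C}}}$; the base case $\Delta = 2^S$ matches $T_{\mathcal{C}}(0, 0) = 0$ directly.
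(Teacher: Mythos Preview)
Your strategy differs from the paper's, and the step you lean on most is not justified.  The paper never tries to verify the direct-sum condition in Proposition~2.5 directly.  Instead it builds explicit decision trees $D$ by induction on $|S|$ (splitting into the four cases loop / not-a-vertex / coloop / ordinary), tracks the evasive-face generating function $E(D,f;q,x)$, and shows it equals the claimed Tutte specialization.  Because those specializations are polynomials in $qx$ (for $r$) or in $q^{r_{\mathcal C}}(qx)^{j}$ (for $n$), the evasive faces at each level $p$ lie in a single dimension, and Lemma~\ref{lem:simple} immediately yields optimality.  No Mayer--Vietoris, no splitting argument, no vertex decomposability of sublevel sets is needed.

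Your argument, by contrast, asserts that ``because $e$ is a non-coloop, the sublevel sets are vertex decomposable with $e$ as a shedding vertex, which is exactly what is needed for the associated Mayer--Vietoris long exact sequence to split.''  Neither clause is established.  You do not prove that $\Delta^{r}_{\le p}$ (or $\Delta^{n}_{\le p}$) is vertex decomposable, and even if it were, vertex decomposability does not by itself force the long exact sequence for the pair $(\Delta_{\le p},\Delta_{<p})$ to split into a direct sum.  What would actually make the long exact sequence degenerate is the inductive knowledge that the relative homology of $\mathcal C-e$ and $\mathcal C/e$ is concentrated in a single degree; this gives a short exact sequence $0\to\widetilde H_{p}(\del)\to\widetilde H_{p}(\Delta)\to\widetilde H_{p-1}(\lk)\to 0$, whose splitting still requires freeness of $\widetilde H_{p-1}(\lk)$ (another fact you would have to carry in the induction).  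None of this appears in your sketch.  Your base case is also off: $\Delta=2^{S}$ does not characterize the situation where no non-coloop exists, since a semimatroid may have elements $e$ with $\{e\}\notin\Delta$; the paper treats that as a separate case in the recursion.
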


The proof relies on using induction and constructing decision trees recursively. Part of the proof involves computing the number of 
$i$-evasive faces for each $i$ in a decision tree. Given a decision tree $T$ for $f$, define 
\begin{equation}
\label{eq:evasive}
 E(T, f; q, x) = \sum_{i \geq 0, j \geq 0} |Ev_{T,j,i}| x^j q^i 
\end{equation}
When $T$ is optimal, we have $E(T, f; q,x) = P(f;q,x)$.

Now we mention constructions on decision trees. 
Let $f: \Delta \to P$ be a monotone function, and let $x \in S$. Suppose we have decision trees 
$T_{\setminus x}$ for $f_{\setminus x}$ and $T_{/ x}$ for $f_{/x}$. Then $T = T_{/x} \vee_{x} T_{\setminus x}$ is the decision tree with root labeled $x$, whose left child 
is the root of $T_{/x}$, and whose right child is the root of $T_{\setminus x}$. Clearly $T$ is a decision tree for $f$.
Also, if we have a decision tree $T$ for $f: \Delta \to \mathbb{N}$, let $T^+$ be obtained by incrementing all the leaf labels by one. Then $T^+$ is a decision tree for $f^+$, 
where $f^+(\sigma) = f(\sigma) + 1$.

\begin{proof}
First, we show that the rank function is homology decidable. We prove, by induction, that there exists a decision tree $D$ for $\mathcal{C}$ such that 
$E(D, r_{\mathcal{C}}; q,x) = (qx)^{r_{\mathcal{C}}}T_{\mathcal{C}}(\frac{qx+1}{qx}, 0)$. 
The expression on the right hand side is a polynomial in $qx$, so $Ev_{D,i,j} = \emptyset$ whenever $i \neq j$. 
By applying Lemma~\ref{lem:simple}, with $d$ being the identity function, we conclude that $D$ is optimal, $r_{\mathcal{C}}$ is 
homology decidable, and $P(r_{\mathcal{C}}; q,x) = E(D, r_{\mathcal{C}}; q, x)$.

For notational convenience, define $P(\mathcal{C}) = (qx)^{r_{\mathcal{C}}}T_{\mathcal{C}}(\frac{qx+1}{qx}, 0)$.
The leaf labeled $0$ is an optimal decision tree for $r_{\emptyset}$. So suppose $|S| > 0$, and let $e \in S$.
By induction, there exists optimal decision trees $D_{/e}$ for $r_{\mathcal{C}/e}$ and $D_{\setminus e}$ for $r_{\mathcal{C}-e}$.
The proof breaks up into cases:
\begin{enumerate}
 \item Suppose $e$ is a loop. Define $D = D_{\setminus e}$. This is a decision tree for $r_{\mathcal{C}}$, because $r(\sigma-e) = r(\sigma+e)$ for all $\sigma \in \Delta$. 
Thus $r_{\mathcal{C}}$ is nonevasive, and $E(D, r_{\mathcal{C}}; q,x) = 0 = P(\mathcal{C})$.

 \item Suppose $e \not\in \Delta_{\mathcal{C}}$. Define $D = N \vee_e D_{\setminus e}$, where $N$ is the tree with one vertex, labeled $N$. This is a decision tree for 
$r_{\mathcal{C}}$ and
 \[E(D, r_{\mathcal{C}}; q,x) = E(D_{\setminus e}, r_{\mathcal{C}-e};q,x) = P(\mathcal{C}-e) = P(\mathcal{C}) \]

 \item Suppose $e$ is a coloop. Define $D = D_{/e}^+ \vee_e D_{/e}$. This is a decision tree for 
$r_{\mathcal{C}}$. For $\sigma \in \del_e(\Delta)$, this follows because $r_{\mathcal{C}/e} = r_{\mathcal{C} - e}$, so $D_{/e}$ is 
a decision tree for $r_{\mathcal{C}-e}$. 
For $\sigma \in \lk_e(\Delta)$, note that $r_{\mathcal{C}}(\sigma) = r_{\mathcal{C}/e}(\sigma)+ 1$, so $D_{/e}^+$ is a decision tree for $r_{\mathcal{C}}$ restricted to $\lk_e(\Delta)$.
Thus \[E(D, r_{\mathcal{C}}; q,x) = (1+qx) E(D_{/e}, r_{\mathcal{C}/e}; q,x) = qx(\frac{1+qx}{qx})P(\mathcal{C}/e) = P(\mathcal{C}) \]

 \item Suppose $e$ is not a loop nor coloop, and $e \in \Delta_{\mathcal{C}}$. Define $D = D_{/e}^+ \vee_e D_{\setminus e}$. This is a decision tree for 
$r_{\mathcal{C}}$. For $\sigma \in \del_e(\Delta)$ this is clear. For $\sigma \in \lk_e(\Delta)$, note that $r_{\mathcal{C}}(\sigma) = r_{\mathcal{C}/e}(\sigma)+1$, so $D_{/e}^+$ is a decision tree for $r_{\mathcal{C}}$ restricted to $\lk_e(\Delta)$.
Thus \[E(D, r_{\mathcal{C}}; q,x) = qx E(D_{/e}, r_{\mathcal{C}/e}; q,x) + E(D_{\setminus e}, r_{\mathcal{C}-e};q,x) = qxP(\mathcal{C}/e) + P(\mathcal{C}-e) = P(\mathcal{C}) \]
\end{enumerate}

Now we show that the nullity function $n$ is homology decidable. This time, set $Q(\mathcal{C}) = (q)^{r_{\mathcal{C}}}T_{\mathcal{C}}(0, qx+1)$. We construct a decision tree $D_{\mathcal{C}}$ such that $E(D_{\mathcal{C}}, n; q,x) = Q(\mathcal{C})$. 
Every term in $Q(\mathcal{C})$ is of the form $q^{r_{\mathcal{C}}}$ times $(qx)^j$ for some $j$. The result follows by applying Lemma~\ref{lem:simple}, with the function $d$ defined by $d(x) = x + r_{\mathcal{C}}$.
The leaf labeled $0$ is an optimal decision tree for $n_{\emptyset}$. So suppose $|S| > 0$, and let $e \in S$. 
By induction, there exists optimal decision trees $D_{/e}$ for $n_{\mathcal{C}/e}$ and $D_{\setminus e}$ for $n_{\mathcal{C}-e}$.
The proof breaks up into cases:
\begin{enumerate}
 \item Suppose $e$ is a loop. Define $D = D_{\setminus e}^+ \vee_e D_{\setminus e}$. This is a decision tree for 
$n_{\mathcal{C}}$. For $\sigma \in \lk_e(\Delta)$, this follows because $n_{\mathcal{C}/e} = n_{\mathcal{C} - e}$, so $D_{\setminus e}$ is 
a decision tree for $n_{\mathcal{C}/e}$. 
For $\sigma \in \lk_e(\Delta)$, note that $n_{\mathcal{C}}(\sigma) = n_{\mathcal{C}/e}(\sigma)+ 1$.
Thus \[E(D, n_{\mathcal{C}}; q,x) = (1+qx) E(D_{\setminus e}, n_{\mathcal{C} - e}; q,x) = (1+qx)Q(\mathcal{C} - e) = Q(\mathcal{C}) \]

 \item Suppose $e \not\in \Delta_{\mathcal{C}}$. Define $D = N \vee_e D_{\setminus e}$, where $N$ is the tree with one vertex, labeled $N$. This is a decision tree for 
$n_{\mathcal{C}}$.
Thus we have \[E(D, n_{\mathcal{C}}; q,x) = E(D_{\setminus e}, n_{\mathcal{C}-e};q,x) = Q(\mathcal{C}-e) = Q(\mathcal{C}) \]

 \item Suppose $e$ is a coloop. Define $D = D_{\setminus e}$. This is a decision tree for $n_{\mathcal{C}}$, because we never need to query $e$ (since $n(\sigma-e) = n(\sigma+e)$ for all $\sigma \in \Delta$). 
Thus $n_{\mathcal{C}}$ is nonevasive, and so $E(D, n_{\mathcal{C}}; q,x) = 0 = Q(\mathcal{C})$.

 \item Suppose $e$ is not a loop nor coloop, and $e \in \Delta_{\mathcal{C}}$. Define $D = D_{/e} \vee_e D_{\setminus e}$. This is a decision tree for 
$\mathcal{C}$. 
Thus we have \[E(D, n_{\mathcal{C}}; q,x) = qx E(D_{/e}, n_{\mathcal{C}/e}; q,x) + E(D_{\setminus e}, n_{\mathcal{C}-e};q,x) = qQ(\mathcal{C}/e) + Q(\mathcal{C}-e) = Q(\mathcal{C}) \]
\end{enumerate}

Finally, $\Delta$ is semi-nonevasive. In this case, we construct decision trees $D$ such that all the evasive faces are of dimension $r_{\mathcal{C}} - 1$. Then $D$ is optimal, and $\Delta$ is semi-nonevasive.
Let $e \in S$, and by induction assume we have optimal decision trees $D_{\setminus e}$ and $D_{/e}$ for $\del_e(\Delta)$ and $\lk_e(\Delta)$ respectively.
\begin{enumerate}
 \item Suppose $e$ is a loop or coloop. Define $D = D_{ \setminus e}$. This is a decision tree for $\mathcal{C}$, because we never need to query $e$ 
(since $\sigma - e \in \Delta$ if and only if $\sigma + e \in \Delta$ for all $\sigma \subseteq S$). 
Thus $\Delta$ is nonevasive, and so $E(D, r_{\mathcal{C}}; q,x) = 0 = q^{r_{\mathcal{C}}}T_{\mathcal{C}}(0,0)$.

 \item Suppose $e \not\in \Delta_{\mathcal{C}}$. Define $D = N \vee_e D_{\setminus e}$, where $N$ is the tree with one vertex, labeled $N$. This is a decision tree for 
$\Delta$, which has $T_{\mathcal{C} - e}(0,0)$ evasive faces of dimension $r_{\mathcal{C} - e} - 1$. The result follows, since $T_{\mathcal{C}-e} = T_{\mathcal{C}}$.

 \item Suppose $e$ is not a loop nor coloop, and $e \in \Delta_{\mathcal{C}}$. Define $D = D_{/e} \vee_e D_{\setminus e}$. This is a decision tree for 
$\Delta$.
Thus we have $T_{\mathcal{C}/e}(0,0) + T_{\mathcal{C} - e}(0,0) = T_{\mathcal{C}}(0,0)$ evasive faces of dimension $r_{\mathcal{C}} - 1$.
 
\end{enumerate}

\end{proof}

\section{Broken circuits}

In this section, we review the notion of broken circuit complex of a matroid, and extend it to semimatroids. 
The notion of broken circuit complex has appeared in the literature before, in the case when $\mathcal{C}$ is a matroid \cite{brylawski}, or comes from an affine hyperplane arrangement. 
Section 1.5 of \cite{orlik-welker} contains a more thorough review of what is known about the broken circuit complex of an affine hyperplane arrangement.
The purpose will be to give direct combinatorial interpretations of the coefficients of Poincar\'e polynomials for $r$ and $\Delta$. 

Let $\mathcal{C} = (S, \Delta, r)$ be a semimatroid, and fix a linear order on $S$. A \emph{broken circuit} is any face of the form $\sigma - \min \sigma$ where $\sigma$ is a circuit. 
A \emph{no-broken circuit} $\tau$ is a face which does not contain a broken circuit. Clearly, no-broken circuits form a subcomplex $BC(\mathcal{C})$ of $\Delta$, which is called the 
\emph{broken circuit complex}.
Recall that, for a simplicial complex $\Delta$, $f(\Delta, q) = \sum_{i \geq 0} f_i q^i$, where $f_i$ is the number of faces of dimension $i$.

\begin{theorem}
\label{thm:vd}
Let $\mathcal{C}$ be a semimatroid. Then $BC(\mathcal{C})$ is vertex-decomposable. Moreover, $f(BC(\mathcal{C}), q) = q^rT_{\mathcal{C}}(\frac{q+1}{q}, 0)$ and 
$$BC(\mathcal{C}) \simeq \bigvee_{T_{\mathcal{C}}(0,0)} \mathbb{S}^{r_{\mathcal{C}} - 1}$$ 

\end{theorem}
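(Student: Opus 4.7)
The plan is induction on $|S|$, following the deletion-contraction structure used in the proof of Theorem~\ref{thm:main}. The base case $S = \emptyset$ is immediate. For the inductive step, I first dispense with degenerate situations. If $\mathcal{C}$ contains a loop $\ell$, then $\{\ell\}$ is a circuit, so $\emptyset$ is a broken circuit and $BC(\mathcal{C})$ is the void complex; this matches $T_{\mathcal{C}}(\tfrac{q+1}{q},0) = 0$ and $T_{\mathcal{C}}(0,0) = 0$. If some $e \in S$ is not a vertex of $\Delta$, then no circuit contains $e$, so $BC(\mathcal{C}) = BC(\mathcal{C}-e)$ and $T_{\mathcal{C}} = T_{\mathcal{C}-e}$, closing the induction immediately.

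Assume henceforth that $\mathcal{C}$ is loopless and every element is a vertex, and set $e = \max S$ under the chosen linear order. If $e$ is a coloop then $e$ lies in no circuit, so the broken circuits of $\mathcal{C}$ and of $\mathcal{C}-e$ coincide as subsets of $S-e$. Every face $\tau \in BC(\mathcal{C}-e)$ is independent in $\mathcal{C}$, and coloopness forces $\tau \cup \{e\}$ to remain independent, hence in $\Delta$; therefore $BC(\mathcal{C}) = \{e\} * BC(\mathcal{C}-e)$ is a cone, which is vertex-decomposable and contractible. The $f$-polynomial identity then follows from $f(BC(\mathcal{C}),q) = (1+q)\,f(BC(\mathcal{C}-e),q)$ combined with $T_{\mathcal{C}} = x\,T_{\mathcal{C}/e}$ and $T_{\mathcal{C}-e} = T_{\mathcal{C}/e}$, and the wedge-of-spheres claim collapses to the identity $T_{\mathcal{C}}(0,0) = 0$.

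The main case is when $e$ is neither a loop nor a coloop. The core structural claims are
\[
\del_e(BC(\mathcal{C})) = BC(\mathcal{C}-e), \qquad \lk_e(BC(\mathcal{C})) = BC(\mathcal{C}/e).
\]
The first identity is straightforward: since $e = \max S$, every broken circuit either avoids $e$ (and then is a broken circuit of $\mathcal{C}-e$) or contains $e$. The second identity, which is the \emph{main obstacle}, requires showing that circuits of $\mathcal{C}/e$ are exactly the minimal $\sigma \subseteq S-e$ with $\sigma \cup \{e\}$ containing a circuit of $\mathcal{C}$ through $e$, and then translating this bijection to broken circuits; the argument must rely on the semimatroid rank axioms (3)--(5) rather than the matroid circuit axioms. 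Once these identifications hold, induction gives vertex-decomposability of $\del_e$ and $\lk_e$, and the shedding condition follows from purity of $BC(\mathcal{C})$ in dimension $r_{\mathcal{C}}-1$ (maintained inductively via $r_{\mathcal{C}-e} = r_{\mathcal{C}}$ and $r_{\mathcal{C}/e} = r_{\mathcal{C}}-1$), since any face of $\lk_e$ has at most $r_{\mathcal{C}}-1$ elements while a facet of $\del_e$ has $r_{\mathcal{C}}$. The $f$-polynomial formula then drops out of $f(BC(\mathcal{C}),q) = f(BC(\mathcal{C}-e),q) + q\,f(BC(\mathcal{C}/e),q)$ together with the Tutte deletion-contraction.

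For the homotopy type I would appeal to Theorem~\ref{thm:main}(3), which already identifies $\Delta$ as homotopy equivalent to $\bigvee_{T_{\mathcal{C}}(0,0)} \mathbb{S}^{r_{\mathcal{C}}-1}$, and then show that $\Delta$ collapses onto $BC(\mathcal{C})$. I would build the collapse by extracting an acyclic matching on $\Delta \setminus BC(\mathcal{C})$ from the decision tree constructed in the proof of Theorem~\ref{thm:main}(3), using the discrete Morse framework of Section~\ref{sec:discmorse}: each face properly containing a broken circuit should be paired with a collapse partner differing by the smallest offending element, with acyclicity forced by the tree structure. The two real technical obstacles are thus the semimatroid-level identification $\lk_e(BC(\mathcal{C})) = BC(\mathcal{C}/e)$ and the explicit acyclic matching witnessing $\Delta \searrow BC(\mathcal{C})$.
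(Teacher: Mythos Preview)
Your inductive argument for vertex-decomposability and the $f$-polynomial is exactly the paper's: let $e=\max S$, dispose of the degenerate cases (non-vertex, loop, coloop), and in the generic case identify $\del_e BC(\mathcal{C}) = BC(\mathcal{C}-e)$ and $\lk_e BC(\mathcal{C}) = BC(\mathcal{C}/e)$. You are more careful than the paper in tracking purity and the shedding condition, which is welcome; the paper's proof is terse here. Your worry about the link identity is appropriate but not a serious obstacle: since $e=\max S$, the circuits of $\mathcal{C}$ through $e$ are in bijection with the circuits of $\mathcal{C}/e$ via $C\mapsto C-e$, and this passes to broken circuits because deleting the minimum commutes with removing the maximum.

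Where you diverge is the homotopy type. Your plan---appeal to Theorem~\ref{thm:main}(3) for $\Delta$, then collapse $\Delta$ onto $BC(\mathcal{C})$ via an acyclic matching extracted from the decision tree---is correct in principle, but it is a detour that essentially reproves Theorem~\ref{thm:collapses} inside the proof of Theorem~\ref{thm:vd}. The paper (implicitly) takes the short route: a pure vertex-decomposable complex is shellable, and a shellable complex of dimension $r_{\mathcal{C}}-1$ is homotopy equivalent to a wedge of $(r_{\mathcal{C}}-1)$-spheres; the number of spheres is then read off from the reduced Euler characteristic, which by your $f$-polynomial identity equals $(-1)^{r_{\mathcal{C}}-1}f(BC(\mathcal{C}),-1)=T_{\mathcal{C}}(0,0)$. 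So once you have established vertex-decomposability and the $f$-polynomial, the homotopy type is free---no need to pass through $\Delta$, Theorem~\ref{thm:main}, or any matching. Your approach does buy something, namely an explicit collapse $\Delta\searrow BC(\mathcal{C})$, but in the paper that is packaged separately as Theorem~\ref{thm:collapses}.
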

This theorem is already known when $\mathcal{C}$ comes from a matroid (Theorem 3.2.1 in \cite{billera-provan}). 
When $\mathcal{C}$ comes from an affine hyperplane arrangement, our result regarding the homotopy type is Theorem 1.5.6 in \cite{orlik-welker}. 

\begin{proof}
 Let $s = \max S$. If $\{s \} \notin \Delta$, then $BC(\mathcal{C}) = BC(\mathcal{C} - s)$, as no-broken circuits are faces, and the result follows by induction. 
If $s$ is a loop, $BC(\mathcal{C}) = \emptyset$, as every face contains a broken circuit $\emptyset$, and the result follows. So suppose $s$ is a coloop. 
Then $BC(\mathcal{C})$ is the join of $\{\emptyset, \{s\} \}$ and $BC(\mathcal{C} - s)$, as we can add $s$ to any no-broken circuit, and still get a no-broken circuit. 
That is, since $s$ is a coloop, it is not an element of a broken circuit (because if it was, then it would be contained in a loop, and hence would not be a coloop).
By induction, the latter is 
vertex-decomposable, and the join of vertex-decomposable complexes is vertex decomposable. Also, $BC(\mathcal{C})$ is contractible, as $BC(\mathcal{C}) = cone(BC(\mathcal{C} - s))$. 

Finally, suppose $s$ is not a loop or coloop. Then $\del_s(BC(\mathcal{C})) = BC(\mathcal{C} - s)$, and $\lk_s(BC(\mathcal{C})) = BC(\mathcal{C} / s)$, so vertex decomposability follows by induction. 
Both these equalities require $s = \max S$, because this gaurantees that $s$ is the maximum element of any circuit containing $s$, so it is never the minimum. 
Thus, one can check that $\sigma$ is a broken circuit with $s \not\in \sigma$ if and only if $\sigma$ is a broken circuit of $\mathcal{C} - s$. Similarly, $\sigma$ is a broken circuit 
containing $s$ if and only if $\sigma - s$ is a broken circuit of $\mathcal{C} / s$.
\end{proof}

A no-broken circuit $\sigma$ is called \emph{critical} if, for every $x \in \sigma$, there exists $y < x$ such that $\sigma - x + y$ is still a no-broken circuit.
One can observe that $T_{\mathcal{C}}(0,0)$ counts the number of critical no-broken circuits of size $r_{\mathcal{C}}$. 
One can verify the Tutte recursion in this case.
Let $CBC(\mathcal{C})$ denote the set of critical no-broken circuits which are also bases. 
In the literature, these are sometimes referred to as $\beta nbc$ sets. 

Thus, we have combinatorial interpretations of the Betti numbers of $\mathcal{C}$, as well as the relative Betti numbers coming from $r_{\mathcal{C}}$. 
In fact, we can construct a decision tree for $r_{\mathcal{C}}$ whose evasive faces are the no-broken circuits. In particular, $\Delta$ collapses onto $BC(\mathcal{C})$. 
We can also construct a decision tree for $\Delta$ whose evasive faces are the critical no-broken circuits.
\begin{theorem}
\label{thm:collapses}
 Let $\mathcal{C}$ be a semimatroid. Then $\Delta$ collapses onto $BC(\mathcal{C})$, via a sequence of simplicial collapses. 
Moreover, $P(r; q,x) = \sum_{\sigma \in BC(\mathcal{C})} (qx)^{|\sigma|}$, and $P(\Delta; q) = \sum_{\sigma \in CBC(\mathcal{C})} q^{|\sigma|}$.
\end{theorem}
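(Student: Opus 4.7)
The plan is to adapt the recursive construction of decision trees from Theorem~\ref{thm:main}, making the choice $e = \max S$ in the fixed linear order used to define $BC(\mathcal{C})$. With this consistent choice, the inductive decision tree for $r$ and the structural recursion for $BC(\mathcal{C})$ in the proof of Theorem~\ref{thm:vd} run in parallel. By induction on $|S|$, I will show that the evasive faces of the resulting tree $D_r$ are exactly the no-broken circuits of $\mathcal{C}$. The inductive step splits into the same four cases treated in Theorem~\ref{thm:vd}: $e$ a loop, $\{e\} \notin \Delta$, $e$ a coloop, or $e$ neither loop nor coloop. In each case the tree produced by the proof of Theorem~\ref{thm:main} for the rank function corresponds to the recursion for $BC(\mathcal{C})$: when $e$ is a loop both sides are empty; when $\{e\} \notin \Delta$ neither changes; when $e$ is a coloop, $BC(\mathcal{C})$ is the join $\{\emptyset,\{e\}\} * BC(\mathcal{C}-e)$, matching the tree $D_{/e}^+ \vee_e D_{/e}$; and when $e$ is neither, the identities $\del_e(BC(\mathcal{C})) = BC(\mathcal{C}-e)$ and $\lk_e(BC(\mathcal{C})) = BC(\mathcal{C}/e)$ match the tree $D_{/e}^+ \vee_e D_{\setminus e}$. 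The choice $e = \max S$ is essential: it guarantees $e$ is never the minimum of any circuit, which is exactly what makes broken-circuit structure compatible with deletion and contraction at $e$. Since an evasive face of $r$ satisfies $r(\sigma) = |\sigma|$, Lemma~\ref{lem:simple} immediately yields $P(r;q,x) = \sum_{\sigma \in BC(\mathcal{C})} (qx)^{|\sigma|}$.

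For the collapse $\Delta \searrow BC(\mathcal{C})$, I appeal to the discrete Morse theory for monotone functions developed in Section~6: every decision tree induces an acyclic matching on the face poset of $\Delta$ whose critical (unmatched) cells are exactly the evasive faces. Applied to $D_r$, the critical cells are precisely the faces of $BC(\mathcal{C})$, which form a subcomplex; a standard application of discrete Morse theory then produces a sequence of elementary simplicial collapses from $\Delta$ onto $BC(\mathcal{C})$.

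For the final identity $P(\Delta;q) = \sum_{\sigma \in CBC(\mathcal{C})} q^{|\sigma|}$, Theorem~\ref{thm:main}(3) already yields $P(\Delta;q) = T_{\mathcal{C}}(0,0)\, q^{r_{\mathcal{C}}}$, and every element of $CBC(\mathcal{C})$ is a basis of size $r_{\mathcal{C}}$, so it suffices to prove $|CBC(\mathcal{C})| = T_{\mathcal{C}}(0,0)$. I would establish this by showing $|CBC(\mathcal{C})|$ satisfies the Tutte deletion-contraction recursion at $e = \max S$. The loop case is immediate since $BC(\mathcal{C}) = \emptyset$, and the case $\{e\} \notin \Delta$ is trivial. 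The main technical work lies in the coloop case (where one must rule out criticality by showing that no basis NBC admits the required exchange at its minimum) and in the final additive splitting $|CBC(\mathcal{C})| = |CBC(\mathcal{C}/e)| + |CBC(\mathcal{C}-e)|$ when $e$ is neither loop nor coloop; both require carefully tracking how the criticality condition interacts with deletion and contraction at $e = \max S$, using the semimatroid exchange axioms together with the no-broken-circuit decompositions already established in Theorem~\ref{thm:vd}.
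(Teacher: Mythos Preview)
Your argument for the first two claims is essentially the paper's own: rerun the recursive construction of Theorem~\ref{thm:main} with the specific choice $e=\max S$, so that the evasive faces of the rank tree coincide with $BC(\mathcal{C})$ via the recursions of Theorem~\ref{thm:vd}, and then invoke part~(4) of Theorem~\ref{thm:fundthm} (the evasive faces form a subcomplex) to obtain the simplicial collapse $\Delta \searrow BC(\mathcal{C})$.

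For the identity $P(\Delta;q)=\sum_{\sigma\in CBC(\mathcal{C})}q^{|\sigma|}$ you diverge slightly. The paper again reruns the decision-tree construction of Theorem~\ref{thm:main}(3) at $s=\max S$ and argues inductively that the evasive faces of the resulting tree for $\Delta$ are exactly the elements of $CBC(\mathcal{C})$; this yields, as a bonus, an explicit optimal decision tree witnessing the equality. Your route instead combines the already-established $P(\Delta;q)=T_{\mathcal{C}}(0,0)\,q^{r_{\mathcal{C}}}$ with a direct verification that $|CBC(\mathcal{C})|$ satisfies the Tutte deletion--contraction recursion at $e=\max S$. This is perfectly valid, and in fact the paper itself remarks just before the theorem that ``one can verify the Tutte recursion in this case.'' The trade-off is that the paper's approach produces a concrete decision tree whose evasive faces are $CBC(\mathcal{C})$ (which is the content of the sentence preceding the theorem statement), whereas your approach only establishes the numerical equality; on the other hand, your route avoids having to reopen the case analysis of Theorem~\ref{thm:main}(3) and reuses its conclusion directly.
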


\begin{proof}

Recursively construct a decision tree for $r$ by following the proof of Theorem~\ref{thm:main}, applied to $s = \max S$, rather than an arbitrary element of $S$. 
This slight change in the proof allows us to assume that there exist trees $D_{\setminus s}$ and $D_{/s}$, for $r_{\setminus s}$ and $r_{/s}$, respectively, whose 
evasive faces correspond to elements of $BC(\mathcal{C} \setminus s)$ and $BC(\mathcal{C}/s)$. Then we construct a new tree $D = D^+_{/s} \vee_s D_{\setminus s}$. 
A careful examination of the proof of Theorem~\ref{thm:vd} reveals then that 
the evasive faces of $D$ are the faces of $BC(\mathcal{C})$. 

Similarly, if we construct a decision tree for $\Delta$ by following the proof of Theorem~\ref{thm:main}, applied to $s = \max S$, then we may assume by induction 
that the evasive faces of $D_{\setminus s}$ are the critical no-broken circuit bases of $\mathcal{C} - s$, and the evasive faces of $D_{/ s}$ are the critical no-broken circuit bases of 
$\mathcal{C}/s$.

\end{proof}

\section{Strong pseudo-independence complexes}
Jonsson defined the notion of strong pseudo-independence complexes of matroids, to capture some of the combinatorial properties complexes like $Bip(G)$ possessed. 

We can restate Jonsson's result as follows:
\begin{theorem}
 Let $M$ be a matroid with rank function $r$, and assume $\Delta$ is a strong pseudo-independence complex over $M$. Then 
\begin{itemize}
 \item $\Delta$ is homology decidable,
 \item the $(r-2)$-skeleton of $\Delta$ is vertex decomposable,
 \item $\Delta$ is homotopy-equivalent to a wedge of $(r-2)$-dimensional spheres.
\end{itemize}
\end{theorem}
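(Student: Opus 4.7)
The plan is to derive all three bullets from the semimatroid machinery of Sections~3 and~4, using the bijection from Section~5. Given a strong pseudo-independence complex $\Delta$ over a matroid $M$ with rank function $r$, that bijection associates a semimatroid $\mathcal{C} = (S, \Delta, r_{\mathcal{C}})$ whose underlying simplicial complex is the given $\Delta$; the matroid rank convention of Section~5 will identify $r - 1$ with $r_{\mathcal{C}}$, so that $(r_{\mathcal{C}} - 1)$-spheres become $(r - 2)$-spheres.

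The first bullet is immediate: Theorem~\ref{thm:main}(3) says the underlying complex of any semimatroid is semi-nonevasive, and semi-nonevasiveness of $\Delta$ is just homology decidability of the constant monotone function $\Delta \to \{\ast\}$. The third bullet is also essentially a citation: Theorem~\ref{thm:collapses} gives a sequence of simplicial collapses from $\Delta$ onto $BC(\mathcal{C})$, so $\Delta \simeq BC(\mathcal{C})$, and Theorem~\ref{thm:vd} identifies the latter with a wedge of $(r_{\mathcal{C}} - 1)$-spheres.

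For the second bullet I would re-examine the recursive decision-tree construction inside the proof of Theorem~\ref{thm:main}(3). That proof builds a decision tree for $\Delta$ whose evasive faces all lie in dimension $r_{\mathcal{C}} - 1$, so the same recursion, applied verbatim to the $(r_{\mathcal{C}} - 2)$-skeleton of $\Delta$, produces a nonevasive decision tree on the skeleton. The element $e \in S$ chosen at each step should serve as a shedding vertex in the Provan--Billera sense: in the loop/coloop case nothing is asked of $e$, in the case $e \notin \Delta$ the truncated skeleton is unchanged, and in the generic case the link and deletion of $e$ inside the skeleton are the corresponding skeleta of the semimatroids $\mathcal{C}/e$ and $\mathcal{C} - e$, so induction applies and yields vertex decomposability of the skeleton.

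The main obstacle is this second bullet. The first and third reduce to direct citations, but vertex decomposability of the $(r-2)$-skeleton requires checking that the shedding vertices produced by the decision-tree recursion genuinely satisfy the Provan--Billera condition on the truncated complex, and that link and deletion commute with the $(r_{\mathcal{C}} - 2)$-skeleton operation in a manner compatible with the rank drop under contraction. Once those compatibilities are verified and the rank identification $r_{\mathcal{C}} = r - 1$ has been read off from Section~5, the theorem is assembled from the three cited results.
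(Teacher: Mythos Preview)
The paper does not give its own proof of this theorem: it is explicitly introduced as a restatement of Jonsson's result, and the paper's contribution is the \emph{equivalence} between strong pseudo-independence complexes and semimatroids (the two theorems following the statement), together with the remark that Theorems~\ref{thm:main}, \ref{thm:vd}, and \ref{thm:collapses} then furnish two \emph{extensions} of Jonsson's theorem (collapse onto a vertex-decomposable subcomplex, and identification of the number of spheres as $T_{\mathcal{C}}(0,0)$). So your plan to derive bullets (1) and (3) from Theorem~\ref{thm:main}(3), Theorem~\ref{thm:collapses}, and Theorem~\ref{thm:vd} via the bijection is exactly in the spirit of what the paper provides, even though the paper does not package it as a proof of Jonsson's statement.

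There are, however, two genuine gaps in your proposal. First, the rank identification $r_{\mathcal{C}} = r-1$ is not what Section~5 gives: the matroid $M$ associated to $\mathcal{C}$ has $r'(S)=\max_{\tau\in\Delta} r(\tau)=r_{\mathcal{C}}$, so the matroid rank and the semimatroid rank coincide. Whatever accounts for the discrepancy between the $(r-2)$ in Jonsson's statement and the $(r_{\mathcal{C}}-1)$ in Theorem~\ref{thm:vd}, it is not a rank shift coming from Section~5, and you cannot simply assert one. Second, your argument for the middle bullet does not go through as written. The $(r_{\mathcal{C}}-2)$-skeleton of $\Delta$ is in general \emph{not} a semimatroid (it will typically violate the exchange axiom), so you cannot feed it back into the recursion of Theorem~\ref{thm:main}; and even if you could, producing a nonevasive decision tree is not the same as exhibiting Provan--Billera shedding vertices, since vertex decomposability carries a purity requirement and a condition on the deletion that nonevasiveness does not. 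The paper's own machinery only shows that $\Delta$ collapses onto $BC(\mathcal{C})$, which is vertex decomposable but is not a skeleton of $\Delta$; the skeleton statement is left to Jonsson.
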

It turns out that strong pseudo-independence complexes, and semimatroids are equivalent notions.
Thus, this present paper supplies at least two new extensions to Jonsson's Theorem. First, $\Delta$ collapses onto a 
vertex-decomposable subcomplex, as a result of Theorems \ref{thm:collapses} and \ref{thm:vd}. 
Second, the number of spheres in the wedge is given by an evaluation of the Tutte polynomial.

Let $M$ be a matroid with ground set $S$, and rank function $r$, and $\Delta$ be a simplicial complex over $S$. Then $\Delta$ is a \emph{pseudo-independence complex} if, 
whenever $\sigma \in \Delta$, $x \in E \setminus \sigma$, and $r(\sigma + x) > r(\sigma)$, then $\sigma + x \in \Delta$. $\Delta$ is a \emph{strong} complex over $M$ if, 
whenever $\sigma \in \Delta$, and there exists $x \in S \setminus \sigma$ such that $r(\sigma + x) = r(\sigma)$ and $\sigma + x \in \Delta$, then $x$ is a cone point of $\lk_{\sigma}(\Delta)$.
Given a semimatroid $\mathcal{C} = (S, \Delta, r)$, let $M$ be the matroid on $S$ with rank function $r'(\sigma) = \max \{ r(\tau): \tau \subseteq \sigma, \tau \in \mathcal{C} \}$.
Then we have the following:
\begin{theorem}
Given a semimatroid $\mathcal{C} = (S, \Delta, r)$, define $M$ to be the matroid on $S$ with rank function $r'(\sigma) = \max \{ r(\tau): \tau \subseteq \sigma, \tau \in \mathcal{C} \}$.
Then $\Delta$ is a strong pseudo-independence complex over $M$. 
\end{theorem}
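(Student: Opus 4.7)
The plan is to dispatch the three ingredients of the conclusion one at a time: that $r'$ really is a matroid rank function, that $\Delta$ is a pseudo-independence complex over $M$, and that $\Delta$ further satisfies the strong cone-point condition. A single preliminary observation will make everything run: for any $\sigma \in \Delta$ one has $r'(\sigma) = r(\sigma)$, since the choice $\tau = \sigma$ is legal in the max defining $r'$ and axiom 2 of the semimatroid prevents any proper subface $\tau \subsetneq \sigma$ of $\mathcal{C}$ from achieving a larger value. The matroid axioms on $r'$ (nonnegativity, boundedness by cardinality, monotonicity, and submodularity) follow by applying axioms 1--4 of the semimatroid to subfaces of $\sigma$, $\tau$, $\sigma \cup \tau$, $\sigma \cap \tau$ that realize the respective maxima; rather than rederive this, I would appeal to \cite{ardila}, where this ``underlying matroid of a semimatroid'' construction is already in place.

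Next I would verify the pseudo-independence condition. Suppose $\sigma \in \Delta$, $x \in S \setminus \sigma$, and $r'(\sigma + x) > r'(\sigma) = r(\sigma)$. Choose $\tau \subseteq \sigma + x$ with $\tau \in \Delta$ realizing $r(\tau) = r'(\sigma + x)$. Axiom 5 of the semimatroid, applied to the pair $(\sigma, \tau)$, produces a $y \in \tau \setminus \sigma$ with $\sigma + y \in \Delta$. But $\tau \setminus \sigma \subseteq (\sigma + x) \setminus \sigma = \{x\}$, so $y = x$ and therefore $\sigma + x \in \Delta$, as required.

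Finally I would verify the strong condition. Suppose $\sigma \in \Delta$, $x \in S \setminus \sigma$, $\sigma + x \in \Delta$, and $r(\sigma + x) = r(\sigma)$. To show $x$ is a cone point of $\lk_{\sigma}(\Delta)$, take any $\tau \in \lk_{\sigma}(\Delta)$, so $\tau \cap \sigma = \emptyset$ and $\tau \cup \sigma \in \Delta$; I must produce $\tau \cup \{x\} \cup \sigma \in \Delta$. The case $x \in \tau$ is immediate, so assume $x \notin \tau$. Set $A = \sigma + x$ and $B = \tau \cup \sigma$; then $A \cap B = \sigma$, whence $r(A) = r(\sigma + x) = r(\sigma) = r(A \cap B)$, and axiom 4 of the semimatroid delivers $A \cup B = \tau \cup \sigma \cup \{x\} \in \Delta$. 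The main obstacle is not either of these two verifications, which are direct single-axiom applications, but the matroid verification: submodularity of $r'$ requires carefully aligning the subfaces of $\sigma$ and $\tau$ that realize the maxima, and is cleanest to import from \cite{ardila} rather than redo from scratch.
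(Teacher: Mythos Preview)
Your proposal is correct and follows essentially the same approach as the paper: both use axiom~5 applied to $\sigma$ and a maximizing subface to force $y=x$ for the pseudo-independence check, and both apply axiom~4 to the pair $(\sigma+x,\ \tau\cup\sigma)$ with intersection $\sigma$ for the strong cone-point check. The only difference is cosmetic: the paper phrases the second step with $\tau\supseteq\sigma$ rather than $\tau\in\lk_\sigma(\Delta)$, and it simply takes the matroid structure on $r'$ as given, whereas you explicitly flag it and defer to \cite{ardila}.
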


\begin{proof}
Let $\sigma \in \Delta$, $x \in S \setminus \sigma$, and suppose $r'(\sigma + x) > r'(\sigma) = r(\sigma)$. 
Hence there is a set $\tau \subset \sigma$ such that $\tau + x \in \Delta$, and $r(\tau+x) = r(\sigma) + 1$. Then there exists $t \in (\tau + x) \setminus \sigma$ with $\sigma + t \in \Delta$. 
Since $\tau + x \setminus \sigma = \{x \}$, it follows that $t = x$, and so $\sigma + x \in \Delta$. Hence, $\Delta$ is a pseudo-independence complex.

$\Delta$ is a strong pseudo-independence complex. Let $\sigma \in \Delta$, $x \in S \setminus \sigma$, and suppose $r'(\sigma + x) = r'(\sigma)$, and $\sigma + x \in \Delta$.
We must show that, for every $\sigma \subseteq \tau \in \Delta$, if $x \not\in \tau$, then $\tau + x \in \Delta$. 
Observe that $r((\sigma+x) \cap \tau) = r(\sigma) = r(\sigma+x)$, so $(\sigma+x) \cup \tau = \tau+x \in \Delta$, since $\Delta$ is a semimatroid. 
Thus $x$ is a cone point, and $\Delta$ is a strong pseudo-independence complex.
\end{proof}

The converse is also true:
\begin{theorem}
Let $\Delta$ be a strong pseudo-independence complex over a matroid $M$ with ground set $S$. Then $(S, \Delta, r|_{\Delta})$ is a semimatroid.
\end{theorem}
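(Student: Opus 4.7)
The plan is to verify the five semimatroid axioms for the triple $(S, \Delta, r|_\Delta)$. Axioms (1)--(3)---nonnegativity with $r(\sigma) \leq |\sigma|$, monotonicity, and submodularity---transfer directly from the matroid $M$, since $r|_\Delta$ is just the restriction of the matroid rank function to $\Delta$. So the real content lies in axioms (4) and (5), both of which require the pseudo-independence structure.

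Axiom (5) is quick. If $\sigma, \tau \in \Delta$ and $r(\sigma) < r(\tau)$, then $r(\sigma \cup \tau) \geq r(\tau) > r(\sigma)$, and a standard basis-extension argument (extend a basis of $\sigma$ to a basis of $\sigma \cup \tau$; any added element lies outside $\sigma$) produces some $y \in \tau \setminus \sigma$ with $r(\sigma + y) > r(\sigma)$. The pseudo-independence hypothesis then gives $\sigma + y \in \Delta$.

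Axiom (4) is the main step. Given $\sigma, \tau \in \Delta$ with $r(\sigma) = r(\sigma \cap \tau)$, enumerate $\sigma \setminus \tau = \{y_1, \ldots, y_m\}$ and induct on $i$ to show that $\tau \cup \{y_1, \ldots, y_i\} \in \Delta$, so that $i = m$ yields $\sigma \cup \tau \in \Delta$. For each $y_i$, the set $(\sigma \cap \tau) + y_i$ is contained in $\sigma \in \Delta$, hence lies in $\Delta$, and monotonicity together with $r(\sigma \cap \tau) = r(\sigma)$ forces $r((\sigma \cap \tau) + y_i) = r(\sigma \cap \tau)$; the strong hypothesis therefore declares each $y_i$ to be a cone point of $\lk_{\sigma \cap \tau}(\Delta)$. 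For the inductive step, the face $\rho = (\tau \setminus \sigma) \cup \{y_1, \ldots, y_i\}$ is disjoint from $\sigma \cap \tau$, and $(\sigma \cap \tau) \cup \rho = \tau \cup \{y_1, \ldots, y_i\}$ lies in $\Delta$ by hypothesis, so $\rho \in \lk_{\sigma \cap \tau}(\Delta)$; applying the cone-point property of $y_{i+1}$ yields $\tau \cup \{y_1, \ldots, y_{i+1}\} \in \Delta$.

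The main obstacle will be choosing the right pivot for the link in axiom (4). Taking $\sigma \cap \tau$ (rather than $\sigma$ or $\tau$) is precisely what makes every element of $\sigma \setminus \tau$ simultaneously a cone point of a single fixed link: the equality $r(\sigma) = r(\sigma \cap \tau)$ rules out any rank increase when one of these elements is appended to $\sigma \cap \tau$, while the disjointness required to exhibit the auxiliary face $\rho$ is automatic between $\sigma \setminus \tau$ and $\tau \setminus \sigma$. Once this vantage point is fixed, the induction essentially writes itself.
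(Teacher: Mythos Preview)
Your proof is correct and follows essentially the same approach as the paper: axioms (1)--(3) are inherited from the matroid, axiom (5) comes from matroid exchange plus pseudo-independence, and axiom (4) is handled by showing each element of $\sigma\setminus\tau$ is a cone point of $\lk_{\sigma\cap\tau}(\Delta)$ and then adding these elements to $\tau$ one at a time. The only cosmetic difference is that the paper organizes the induction on $|\sigma\setminus\tau|$ by recursing on the pair $(\sigma,\tau+x)$ (so the pivot $\sigma\cap\tau$ grows at each step), whereas you fix the pivot once and observe that all the $y_i$ are cone points of the same link; both arguments are driven by the same use of the strong hypothesis.
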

\begin{proof}
The first three conditions in the definition of semimatroid are automatically satisfied, since $r$ is the rank function of a matroid. So suppose $\sigma, \tau \in \Delta$ with 
$r(\sigma) = r(\sigma \cap \tau)$. We prove $\sigma \cup \tau \in \Delta$ by induction on $\sigma \setminus \tau$. Consider $x \in \sigma \setminus \tau$, and let $\rho = \sigma \cap \tau$. 
Since $r(\rho) \leq r(\rho+x) \leq r(\sigma) = r(\rho)$, and $x \in \sigma \in \Delta$, it follows that $x$ is a cone point of $\lk_{\rho}(\Delta)$.
 Since $\tau \setminus \rho \in \lk_{\rho}(\Delta)$, it follows that $\tau + x \in \Delta$. Applying induction to $\sigma, \tau + x$, it follows that $\sigma \cup \tau \in \Delta$.

Let $\sigma, \tau \in \Delta$ such that $r(\sigma) < r(\tau)$. Since $r$ is the rank function of a matroid, there exists $x \in \tau \setminus \sigma$ such that $r(\sigma) < r(\sigma + x)$. Since $\Delta$ is a 
pseudo-independence complex, $\sigma + x \in \Delta$. Thus, $(S, \Delta, r|_{\Delta})$ is a semimatroid.
\end{proof}

We end this section be discussing dual strong pseudo-independnce complexes. Jonsson defines a complex $\Delta$ to be SPI$^{\ast}$ over a matroid $M$ if and only if $\Delta^{\ast}$, the Alexander dual of $\Delta$, 
is a strong pseudo-independence complex over $M^{\ast}$, the dual of $M$. Ardila showed that the Alexander dual of a semimatroid is again a semimatroid (Proposition 7.2 in \cite{ardila}); hence, SPI$^{\ast}$ complexes are also semimatroids. 

\section{Discrete Morse theory for monotone functions}
\label{sec:discmorse}

In this section, we extend some of the main theorems of discrete Morse theory to monotone functions. To keep with the theme of the paper, we will state the results in terms of decision trees. 
The proofs will essentially follow the proof of Theorem 11.13 in Kozlov \cite{kozlov}, only with slight modifications to deal with monotone functions. We only include it for the 
sake of completeness.

The usual combinatorial approach to discrete Morse theory is via acyclic matchings; this approach is due to Chari \cite{chari}.
 However, it is known that there is a relationship between acyclic matchings and decision trees: given any acyclic matching, one can construct a decision tree, and vice-versa (a proof is given in \cite{jonsson-decision}). 
This implies that there is a relationship between discrete Morse theory and decision trees; this section is meant to clarify this relationship, by showing how to go directly from 
a decision tree to a sequence of simplicial collapses.

\begin{definition}
 Let $f: \Delta \to P$ be a monotone function. An $f$-tree $T$ is a set decision tree if it satisfies exactly one of the following:
\begin{enumerate}
 \item $T$ consists of a single vertex, labeled $N$, and $\Delta$ is void.
 \item $T$ consists of only a single vertex, labeled $\hat{0}$, $f$ is the constant function of value $\hat{0}$, and $\Delta$ is a simplex.
 \item The root of $T$ is labeled by $\sigma \subset S$, with $\sigma \in \Delta$, the left subtree $T_{\ell}$ is an element decision tree for $f_{\setminus \sigma}$, 
and the right subtree $T_{r}$ is an element decision tree for $f_{/ \sigma}$. 
\end{enumerate}
\end{definition}

Given a set decision tree $T$ for $f$, let $L$ denote the leaves of $L_T$. When we run the decision algorithm for a given $\sigma \subseteq S$, we land on some leave $\ell(\sigma)$. 
This defines a function $\ell: 2^S \to L_T$. This function has some nice properties.
\begin{lemma}
Let $T$ be a decision tree for a monotone function $f$, and let $v \in L_T$. Then $\ell^{-1}(v)$ is a boolean interval. 
Moreover, the faces of $\Delta$ are partitioned into these boolean intervals.
\end{lemma}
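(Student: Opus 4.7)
The plan is to prove both assertions by induction on the size of $T$. In the base cases where $T$ is a single leaf $v$, the map $\ell$ sends every input to $v$, so $\ell^{-1}(v) = 2^S = [\emptyset, S]$ is a Boolean interval, and the partition of $\Delta$ is either empty (when $v = N$ and $\Delta$ is void) or consists of the single interval $[\emptyset, S]$ (when $v = \hat{0}$, $f$ is constant, and $\Delta$ is the full simplex on $S$).

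For the inductive step, let the root of $T$ be labeled by $\sigma \in \Delta$, with left subtree $T_\ell$ a decision tree for $f_{\setminus \sigma}$ on $\del_\sigma(\Delta)$ and right subtree $T_r$ a decision tree for $f_{/ \sigma}$ on $\lk_\sigma(\Delta)$ with ambient $S \setminus \sigma$. For a leaf $v$ of $T_r$, the input $\tau$ reaches $v$ precisely when $\sigma \subseteq \tau$ and $\tau \setminus \sigma$ reaches $v$ in $T_r$. By the inductive hypothesis, $\ell_{T_r}^{-1}(v) = [a, b]$ is a Boolean interval in $2^{S \setminus \sigma}$, and hence $\ell_T^{-1}(v) = [a \cup \sigma, b \cup \sigma]$ is a Boolean interval in $2^S$. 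For a leaf $v$ of $T_\ell$, the input $\tau$ reaches $v$ precisely when $\sigma \not\subseteq \tau$ and $\tau$ reaches $v$ in $T_\ell$; by induction $\ell_{T_\ell}^{-1}(v) = [a, b]$ is a Boolean interval, and in the full tree $\ell_T^{-1}(v) = [a, b] \cap \{\tau : \sigma \not\subseteq \tau\}$.

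The main technical obstacle is verifying that this last intersection is itself a Boolean interval. The key observation is that $T_\ell$ correctly decides membership in $\del_\sigma(\Delta)$: for any non-$N$ leaf $v$, the entire preimage $[a, b]$ must lie in $\del_\sigma(\Delta)$, and since $b \in [a,b]$ we obtain $\sigma \not\subseteq b$, so the constraint is automatic and $\ell_T^{-1}(v) = [a, b]$. For an $N$-leaf of $T_\ell$, I would trace the root-to-leaf path and exploit the recursive structure: either some element of $\sigma$ has been pinned outside $\tau$ by one of the path's queries (so $\sigma \not\subseteq b$ and the intersection equals $[a,b]$) or every element of $\sigma$ has been pinned inside $\tau$ (so the preimage in $T$ is empty, a degenerate Boolean interval). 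Assembling the intervals from $T_r$ and $T_\ell$, determinism of $\ell$ and correctness of $T$ ensure that distinct leaves have disjoint preimages covering $2^S$, and restriction to non-$N$ leaves gives the desired partition of $\Delta$ into Boolean intervals.
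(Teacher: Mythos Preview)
Your inductive strategy matches the paper's --- split at the root, push the claim to the two subtrees, and reassemble --- though you supply far more detail than the paper, which simply asserts that the boolean-interval structure survives the induction without examining the deletion side.

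Your handling of the link subtree and of the non-$N$ leaves on the deletion side is correct, and this already suffices for the second assertion (the partition of $\Delta$ into boolean intervals), which is the only part used downstream. There is, however, a genuine gap in your treatment of $N$-leaves of $T_\ell$. The dichotomy you propose --- ``some element of $\sigma$ has been pinned outside'' versus ``every element of $\sigma$ has been pinned inside'' --- is not exhaustive: an element of $\sigma$ may simply never be queried along the path in $T_\ell$, so it lies in $b\setminus a$. When $\sigma=\{x\}$ is a singleton this is harmless, since then $[a,b]\cap\{\tau:x\notin\tau\}=[a,\,b\setminus\{x\}]$ is still a boolean interval. But if $|\sigma|\ge 2$ and you have $\sigma\subseteq b$ while $\sigma\not\subseteq a$, the set $[a,b]\cap\{\tau:\sigma\not\subseteq\tau\}$ need not be a boolean interval at all (take $a=\emptyset$, $b=\sigma=\{1,2\}$: the intersection is $\{\emptyset,\{1\},\{2\}\}$). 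One can check that in this situation $a\notin\Delta$, hence the whole interval $[a,b]$ misses $\Delta$, so nothing is lost for the partition claim; but your stated argument for the first clause does not go through, and indeed that clause, read literally for every leaf of a set decision tree with non-singleton root, is too strong.
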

\begin{proof}
Proof by induction: if $T$ consists of only one vertex, then $\Delta$ is a simplex, $\ell^{-1}(v) = \Delta$, so we are done. 
Otherwise, let $T_{\ell}$ be the left subtree of $T$, and $T_r$ be the right subtree of $T$. every leaf is a leaf of one of these two subtrees, 
so by induction $\ell^{-1}(v)$ must be a boolean interval. To show that the boolean intervals partition $\Delta$, let $\sigma$ be the label of the root 
of $T$. Let $\tau$ be a face. If $\sigma \subset \tau$, then $\sigma$ is contained in one of the boolean intervals of a leaf of $T_{\ell}$. Otherwise, 
$\sigma$ is a face contained in one of the boolean intervals of a leaf of $T_r$.
\end{proof}

A face $\sigma \in \Delta$ is $p$-evasive if $f(\sigma) = p$, and $\ell^{-1}(\ell(\sigma)) = \{\sigma \}$. The reader can check that, when $T$ is an element decision tree, this definition matches the earlier one.

\begin{theorem}
\label{thm:fundthm}
Let $f: \Delta \to P$ be a monotone function, and let $T$ be a set decision tree, such that $\emptyset$ is nonevasive. Then there exists a cell complex $X$, a homotopy equivalence 
$\varphi: \Delta \to X$ and a monotone function $f: X \to P$, such that:
\begin{enumerate}
\item The $i$-cells of $X_{\leq p} \setminus X_{< p}$ are indexed by the elements of $Ev_{T,p,i}$.
\item $\varphi$ restricts to a homotopy equivalence between $(\Delta_{\leq p}, \Delta_{< p})$ and $(X_{\leq p}, X_{< p})$ for all $p \in P$.
\item For $\sigma \in E_{T, p, i}$, the corresponding cell in $X$ also has function value $p$.
\item If $\cup_{q \leq p, i \in \mathbb{N}} Ev_{T, q, i}$ is a simplicial complex for all $p \in P$, then $X$ is also a simplicial complex, and $\Delta$ collapses onto $X$ via a sequence of simplicial collapses.

\end{enumerate}

\end{theorem}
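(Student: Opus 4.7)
The plan is to induct on the structure of the set decision tree $T$, following the template of Kozlov's Theorem 11.13 but carrying along the monotone function $f$ and the filtration $(\Delta_{\leq p}, \Delta_{< p})$. When $T$ is a single leaf labeled $N$, $\Delta$ is void and we take $X = \emptyset$. When $T$ is a single leaf labeled $\hat 0$, $\Delta$ is a simplex with $f$ constant, and the nonevasiveness of $\emptyset$ forces the unique boolean interval to have size at least two, so no face is evasive and $X$ can be taken to be a point carrying the constant value $\hat 0$.

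For the inductive step, let the root of $T$ be labeled $\sigma \in \Delta$, with subtrees $T_\ell, T_r$ serving as set decision trees for $f_{\setminus \sigma}$ on $\del_\sigma(\Delta)$ and $f_{/\sigma}$ on $\lk_\sigma(\Delta)$ respectively. The boolean interval lemma above shows that the intervals of $T$ split into intervals coming from $T_\ell$ (living in $\del_\sigma(\Delta)$) and intervals of the form $\{\tau \cup \sigma : \tau \in I\}$ where $I$ is an interval of $T_r$ (living in the contractible closed star $\sigma * \lk_\sigma(\Delta)$). The map $\tau \mapsto \tau \cup \sigma$ preserves $f$-value and shifts dimension by $|\sigma|$, so the $p$-evasive faces of $T$ of dimension $i$ split bijectively as $Ev_{T_\ell, p, i} \sqcup \{\tau \cup \sigma : \tau \in Ev_{T_r, p, i-|\sigma|}\}$. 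Inductively obtain $X_\ell, X_r, \varphi_\ell, \varphi_r$, and build $X$ by starting from $X_\ell$ and attaching, for each $i$-cell of $X_r$ coming from $\tau$, an $(i+|\sigma|)$-cell along the composition of the $X_r$-attaching map with a cellular model of the inclusion $\partial \sigma * \lk_\sigma(\Delta) \hookrightarrow \del_\sigma(\Delta)$ followed by $\varphi_\ell$. Assigning function value $p$ to each new cell gives property (3); properties (1) and (2) follow from the dimension bijection and from recognizing $\Delta$ as the filtration-preserving pushout of $\del_\sigma(\Delta) \leftarrow \partial \sigma * \lk_\sigma(\Delta) \to \sigma * \lk_\sigma(\Delta)$, the latter factor being contractible.

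The main obstacle is property (4), the assertion that when $\bigcup_{q \leq p, i} Ev_{T,q,i}$ is a subcomplex for every $p$, the construction can be realized by a sequence of simplicial collapses respecting the filtration. The strategy is to schedule collapses coming from pairs $(\rho, \rho \cup \{x\})$ within non-trivial boolean intervals, ordered so that within each stratum $\Delta_{\leq p} \setminus \Delta_{< p}$ the collapses are performed from the deepest leaves of $T$ outward, then the algorithm advances to the next stratum. The subcomplex hypothesis on the evasive faces at level $p$ is precisely what guarantees that at every step the scheduled top face of a collapse remains free (its only coface within the surviving subcomplex is its partner), so the collapse is legal. This bookkeeping is the technical heart of the proof and mirrors Kozlov's argument with the extra layer of stratification by $f$.
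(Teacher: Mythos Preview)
Your structural induction on $T$ takes a genuinely different route from the paper. The paper inducts on the number of leaves of $T$ not labeled $N$ and processes those leaves \emph{one at a time} in the natural linear order on $L_T$. For the first such leaf $v$, with $\ell^{-1}(v)=[\sigma,\tau]$, the ordering forces $\sigma$ to be a free face of the current complex; a single technical lemma (collapsing a full simplex onto the deletion of a face) then removes the entire interval at once, and one recurses on the pruned tree. When $\sigma=\tau$ (an evasive face) one instead glues a single cell onto the inductively built $X'$ along $\varphi'(\partial\sigma)$. Part (4) is handled by running the same leaf order while leaving evasive faces in place; the subcomplex hypothesis is exactly what rules out an evasive $\rho\supsetneq\sigma$ obstructing freeness, since closure under subsets would then force $\sigma$ itself to be evasive. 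No stratification by $p$ is used.

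Your approach has two real gaps. First, the inductive hypothesis need not hold for $T_r$: nonevasiveness of $\emptyset$ in $T$ transfers to $T_\ell$ (since $\emptyset$ always branches into the deletion side), but in $T_r$ the empty face corresponds to $\sigma$ in $T$, which may well be $T$-evasive, so you cannot invoke the theorem for $T_r$ as stated. Second, your attaching map does not typecheck: the $X_r$-attaching map of the $\tau$-cell lands in the skeleton of $X_r$, not in $\partial\sigma*\lk_\sigma(\Delta)$, so there is no composition with the inclusion into $\del_\sigma(\Delta)$. What you actually need is to attach $\sigma\cup\tau$ along a sphere that splits as $(\partial\sigma)*\bar\tau$ (which does map to $X_\ell$ via $\varphi_\ell$) together with $\bar\sigma*(\partial\tau)$ (which must land in previously attached shifted cells), and making these two pieces agree on the overlap $(\partial\sigma)*(\partial\tau)$ is precisely the delicate point your sketch omits. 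The paper's leaf-by-leaf scheme sidesteps all of this: each evasive $\sigma$ is attached directly along $\varphi(\partial\sigma)$ where $\varphi$ is the composite of all earlier collapses, so no pushout bookkeeping between the two subtrees is ever required.
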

There is a total order on $L_T$, which we describe. Given two leaves $u$ and $v$, consider the shortest paths $P$ and $Q$ from the root to $u$ and $v$, respectively, 
and let $x$ be the last node $P$ and $Q$ have in common. Then $u < v$ if $u$ is a descendant of the left child of $x$, and $v$ is a descendant of the right child of $x$. 
In view of the above proposition, this means that we actually have partitioned $\Delta$ into a sequence of boolean intervals, where $f$ is constant on each interval. 
Thus we obtain a sequence of homotopy equivalences arising from cellular collapses. 

\begin{remark}
 Theorem \ref{thm:fundthm} is still true when $\emptyset$ is evasive, provided we add one more $0$-cell and $(-1)$-cell to $X$. The $(-1)$-cell has function value $f(\emptyset)$, 
and the $0$-cell has function value given by the label of the largest element of $L_T$ whose label is not $N$.
\end{remark}

The proof relies on the following technical lemma, which is well-known.
\begin{lemma}
\label{lem:technical}
Let $\tau$ be a simplex, and $\sigma \subset \tau$ be a non-empty face, $\sigma \neq \tau$. Then there exists a sequence of simplicial collapses from $\tau$ to $\tau \setminus \sigma$. 
This gives rise to a homotopy equivalence $\varphi: \tau \to \tau \setminus \sigma$.
\end{lemma}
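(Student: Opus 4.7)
The plan is to exhibit an explicit matching on the face poset of the simplex $\tau$ whose unmatched faces are exactly those of $\tau \setminus \sigma$, and then to realize the matching as a sequence of elementary simplicial collapses. Since $\sigma \neq \tau$, pick a vertex $w \in \tau \setminus \sigma$, and for every face $\alpha \subseteq \tau$ with $\alpha \cap \sigma \neq \emptyset$ and $w \notin \alpha$, pair $\alpha$ with $\alpha \cup \{w\}$. Every face meeting $\sigma$ lies in exactly one such pair (as the smaller element when it omits $w$, and as the larger element when it contains $w$, since $w \notin \sigma$), while the unmatched faces are precisely the subsets of $\tau \setminus \sigma$.

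To execute this as a collapse sequence, process the pairs $(\alpha, \alpha \cup \{w\})$ in decreasing order of $|\alpha|$. The main step is to verify that at the moment $(\alpha, \alpha \cup \{w\})$ is processed, $\alpha$ is a free face of the current complex. Let $\beta$ be a proper coface of $\alpha$ in the current complex with $\beta \neq \alpha \cup \{w\}$; then $\beta \cap \sigma \supseteq \alpha \cap \sigma \neq \emptyset$, so $\beta$ itself is matched. If $w \in \beta$ then $\beta \supsetneq \alpha \cup \{w\}$, and $\beta$ is the larger element of the pair $(\beta \setminus \{w\}, \beta)$ with $|\beta \setminus \{w\}| > |\alpha|$, which was processed earlier. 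If $w \notin \beta$ then $\beta$ is the smaller element of the pair $(\beta, \beta \cup \{w\})$ with $|\beta| > |\alpha|$, and again processed earlier. Either way $\beta$ has already been removed, so $\alpha \cup \{w\}$ is the unique remaining coface of $\alpha$.

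Hence each pair furnishes a valid elementary simplicial collapse, and their composition takes $\tau$ to $\tau \setminus \sigma$. Since each elementary collapse is a strong deformation retract, the composition produces the desired homotopy equivalence $\varphi: \tau \to \tau \setminus \sigma$. I expect the freeness verification above to be the only real obstacle; once the decreasing cardinality ordering is fixed, the case split on whether a proper coface of $\alpha$ contains the auxiliary vertex $w$ makes the argument automatic.
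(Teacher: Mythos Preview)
Your overall strategy --- pick an auxiliary vertex $w\in\tau\setminus\sigma$, match by toggling $w$, and process the pairs in decreasing cardinality --- is exactly the paper's approach: the paper's pairs $(\gamma\cup\sigma,\gamma\cup\sigma\cup\{x\})$ with $\gamma\subseteq\tau\setminus(\sigma\cup\{x\})$ are precisely the pairs $(\alpha,\alpha\cup\{w\})$ with $\sigma\subseteq\alpha$ and $w\notin\alpha$, ordered by decreasing dimension.

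The one genuine problem is a misreading of the target complex. In this lemma (and in its use in the proof of Theorem~\ref{thm:fundthm}, where one collapses $\Delta$ onto $\del_\sigma(\Delta)$), the symbol $\tau\setminus\sigma$ denotes $\del_\sigma(\tau)=\{\rho\subseteq\tau:\sigma\not\subseteq\rho\}$, not the full simplex on the vertex set $\tau\setminus\sigma$. Your matching condition ``$\alpha\cap\sigma\neq\emptyset$'' therefore removes too much: it leaves only the subsets of $\tau\setminus\sigma$, whereas the lemma asks you to leave all faces not containing $\sigma$. For the application this matters, since inside the ambient complex one must remove exactly the interval $[\sigma,\tau]$ and nothing else.

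The fix is a one-symbol change: replace ``$\alpha\cap\sigma\neq\emptyset$'' by ``$\sigma\subseteq\alpha$'' throughout. Every face containing $\sigma$ is then matched (as the smaller element if $w\notin\alpha$, and as the larger element if $w\in\alpha$, since $w\notin\sigma$ gives $\sigma\subseteq\alpha\setminus\{w\}$), and the unmatched faces are exactly $\del_\sigma(\tau)$. Your freeness verification carries over verbatim, because $\sigma\subseteq\alpha\subsetneq\beta$ still forces $\sigma\subseteq\beta$, so every proper coface of $\alpha$ other than $\alpha\cup\{w\}$ is matched and, by the same case split on $w\in\beta$, was processed earlier. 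With this correction your argument is essentially identical to the paper's.
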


\begin{proof}
 Let $x \in \tau \setminus \sigma$, $\Delta = \del_{x}(\lk_{\sigma}(\tau))$. Order the faces of $\Delta$ linearly so that $\gamma < \rho$ implies that $\dim \gamma \geq \dim \rho$, for all $\gamma, \rho \in \Delta$.
Let $\gamma_0, \gamma_1, \ldots, \gamma_k$ denote the resulting total order. Then let $(\gamma_0 \cup \sigma, \gamma_0, \cup \sigma \cup \{x \}), \ldots, (\gamma_k \cup \sigma, \gamma_k \cup \sigma \cup \{x\})$ 
denote the resulting sequence of collapses. Since we order by decreasing dimension, at the $i$th step $\gamma_i \cup \sigma$ is a free face, so we can perform the simplicial collapse. 
After the collapses are performed, the only faces left are those of $\tau \setminus \sigma$.
\end{proof}

\begin{proof}[Proof of Theorem~\ref{thm:fundthm}]

Let $f: \Delta \to P$ be monotone, and $T$ be a set decision tree. The result is proven by induction on the number of leaves in $L_T$ whose label is not $N$. 
If only one leaf is not labeled $N$, then $\Delta$ is a simplex, and $f$ is a constant function. If $\Delta$ is the empty simplex, then the empty set is 
evasive. Otherwise, $\Delta$ collapses to a point $x$ via some sequence of simplicial collapses; the Theorem follows with $X = \{x \}$, and 
$\varphi$ the homotopy equivalence induced from the sequence of collapses.

Given the ordering on $L_T$, consider the first vertex $v$ whose label is not $N$. 
Then $\ell^{-1}(v)$ is non-empty, and is a boolean interval $[\sigma, \tau]$, for some faces $\sigma$ and $\tau$. 
Moreover, $\sigma \neq \emptyset$.

We construct a decision tree for $f_{\setminus \sigma}$. Replace the label of $v$ with $N$, to obtain a new tree $T'$. Let $w$ be the parent of $v$.
The result is almost a decision tree for $f_{\setminus \sigma}$. There may be an issue where two leaves with the same parent both have the label $N$ 
(which would not satisfy our definition of decision tree). If this is the case, remove those leaves, and replace the parent node's label with $N$. 
Continue doing so until there are no longer siblings who are both labeled $N$. The result is a decision tree for $f_{\setminus \sigma}$.
Thus, by induction, there exists $X'$, and a homotopy equivalence $\varphi$, satisfying 1-3 of the Theorem, with respect to the monotone function $f_{\setminus \sigma}$. 

Suppose $\sigma \neq \tau$. We claim that $\sigma$ is a free face. If $\sigma \subset \rho$, then $\ell(\rho) \leq \ell(\sigma)$. If $\ell(\rho) < \ell(\sigma)$, 
then the label of $\ell(\rho)$ is $N$, which implies that $\rho \not\in\Delta$. Thus, $\ell(\rho) = \ell(\sigma)$, and $\sigma$ is a free face. 
Then, by Lemma~\ref{lem:technical}, $\Delta$ collapses onto $\del_{\sigma}(\Delta)$, which gives a homotopy equivalence $\psi$. 
Then the theorem holds with $X = X'$, and homotopy equivalence $\varphi \circ \psi$.

Suppose $\sigma = \tau$. Then $\sigma$ is evasive. Let $X = X' \cup_{\varphi(\del \sigma)} \sigma$. Then there is a homotopy equivalence between $\Delta$ and $X$. 
We need to show that $f(\sigma) \geq f(\rho)$ for any cell $\rho$ in $X$ which lies on $\sigma$. Note that $\varphi$ is constructed inductively through a series of simplicial collapses. 
For any $\pi$ that is a face of $\sigma$ in $\Delta$, we see that each collapse either does not alter 
$\pi$, or replaces $\pi$ with a union of faces $\omega$ with $f(\omega) \leq f(\pi) \leq f(\sigma)$. Thus, after 
all the collapses are done, every face $\rho$ of $\sigma$ in $X$ satisfies $f(\rho) \leq f(\sigma)$. By induction, parts 1-3 of the theorem hold.

To show part 4, note that the linear order on $L_T$ induces a linear order on $NE_T$, the subset of $L_T$ of vertices $v$ for which $\ell^{-1}(v)$ has more than one element.
 Using Lemma~\ref{lem:technical}, we obtain a sequence of collapses from $\Delta$ to the subcomplex generated by the evasive faces.

\end{proof}
Now we prove the lemmas from Section 3:

\begin{proof}[Proof of Lemma~\ref{lem:morseinequalities}]
Let $T$ be a decision tree for a monotone function $f: \Delta \to P$. 
By Theorem~\ref{thm:fundthm}, there exists a cell complex $X$ and a homotopy equivalence $\varphi: \Delta \to X$ satisfying the conditions 1-4. Fix $p \in P$. 
Then $(\Delta_{\leq p}, \Delta_{< p})$ is homotopy equivalent to $(X_{\leq p}, X_{< p})$ by part 3. Hence $\widetilde{\beta}_i(\Delta_{\leq p}, \Delta_{< p}) \leq |C_i(X_{\leq p}, X_{< p})|$, 
the number of $i$-dimensional cells $\sigma$ in $X$ with $f(\sigma) = p$. However, by part 2, this value is $ev_T(f,p,i)$, and the result follows.

\end{proof}

\begin{proof}[Proof of Lemma~\ref{lem:simple}]
Let $T$ be a decision tree with the property that all evasive $p$-faces are equidimensional. This means that, for each $p \in P$, there is an integer $n_p$ such that $|E_{T, p, i}| = 0$ unless $i = n_p$. 
Let $X$ be the cell complex given by Theorem~\ref{thm:fundthm}. Then we see that the relative chain groups $C_i(X_{\leq p}, X_{< p})$ are only non-trivial when $i = n_p$. Hence, the 
relative homology is concentrated in dimension $n_p$, and in fact $\widetilde{\beta}_{i}(X_{\leq p}, X_{< p}) = \dim C_{i}(X_{\leq p}, X_{< p}) = |Ev_{T,p, i}|$. Thus $T$ is optimal, and $f$ is semi-nonevasive.
\end{proof}

To conclude this section, we make some remarks about discrete Morse theory via decision trees. Sometimes this approach is easier than the traditional acyclic matching approach. 
For instance, we were able to use the deletion-contraction recurrence for semimatroids to construct decision trees in this current paper. 
However, there are some drawbacks to this approach: decision trees are only defined for abstract simplicial complexes with a finite vertex set. 
Also, the Fundamental Theorem of Discrete Morse Theory contains a statement which gives a formula for the boundary operator of $X$ in terms of the boundary operator of $\Delta$.
However, the boundary operator is not an invariant of the decision tree, because it depends on the collapses performed in Lemma~\ref{lem:technical}.
Also, if one is interested in the cell complex obtained from the Fundamental Theorem \ref{thm:fundthm}, then in order to describe the boundary operator one needs to 
keep track of the collapses, and it appears that approaching discrete Morse theory via acyclic matchings \cite{chari}.

\section{Acknowledgments}
The author would like to thank John Shareshian, Jakob Jonsson, Matthias Beck, Volkmar Welker, and Chris Severs for many stimulating conversations related to this work, 
and for help with finding references in the literature. The author was partially supported by an NSF grant DMS-0932078,
administered by the Mathematical Sciences Research Institute while the
author was in residence at MSRI during the Complementary Program, Spring 2011.
This work began during the
visit of the author to MSRI and we thank the institute for its
hospitality.

\bibliographystyle{amsalpha}
\bibliography{k_parbib.bib}

\providecommand{\bysame}{\leavevmode\hbox to3em{\hrulefill}\thinspace}
\providecommand{\MR}{\relax\ifhmode\unskip\space\fi MR }
\providecommand{\MRhref}[2]{%
  \href{http://www.ams.org/mathscinet-getitem?mr=#1}{#2}
}
\providecommand{\href}[2]{#2}
\begin{thebibliography}{Ard07b}

\bibitem[Ard07a]{ardila-tutte}
Federico Ardila, \emph{Computing the {T}utte polynomial of a hyperplane
  arrangement}, Pacific J. Math. \textbf{230} (2007), no.~1, 1--26. \MR{2318445
  (2008g:52034)}

\bibitem[Ard07b]{ardila}
\bysame, \emph{Semimatroids and their {T}utte polynomials}, Rev. Colombiana
  Mat. \textbf{41} (2007), no.~1, 39--66. \MR{2355665 (2008j:05082)}

\bibitem[Bry77]{brylawski}
Tom Brylawski, \emph{The broken-circuit complex}, Trans. Amer. Math. Soc.
  \textbf{234} (1977), no.~2, 417--433. \MR{468931 (80a:05055)}

\bibitem[Cha]{chari}
Manoj~K. Chari, \emph{Manuscript}.

\bibitem[For95]{formanmorse}
Robin Forman, \emph{A discrete {M}orse theory for cell complexes}, Geometry,
  topology, \& physics, Conf. Proc. Lecture Notes Geom. Topology, IV, Int.
  Press, Cambridge, MA, 1995, pp.~112--125.

\bibitem[Jon05]{jonsson-decision}
Jakob Jonsson, \emph{Optimal decision trees on simplicial complexes}, Electron.
  J. Combin. \textbf{12} (2005), Research Paper 3, 31 pp. (electronic).
  \MR{2134166 (2006d:05186)}

\bibitem[Jon08]{jonsson}
Jakob Jonsson, \emph{Simplicial complexes of graphs}, Lecture Notes in
  Mathematics, vol. 1928, Springer-Verlag, Berlin, 2008.

\bibitem[Koz08]{kozlov}
Dmitry Kozlov, \emph{Combinatorial algebraic topology}, Algorithms and
  Computation in Mathematics, vol.~21, Springer, Berlin, 2008.

\bibitem[OW07]{orlik-welker}
Peter Orlik and Volkmar Welker, \emph{Algebraic combinatorics}, Universitext,
  Springer, Berlin, 2007, Lectures from the Summer School held in Nordfjordeid,
  June 2003. \MR{2322081 (2008h:05116)}

\bibitem[PB80]{billera-provan}
J.~Scott Provan and Louis~J. Billera, \emph{Decompositions of simplicial
  complexes related to diameters of convex polyhedra}, Math. Oper. Res.
  \textbf{5} (1980), no.~4, 576--594. \MR{593648 (82c:52010)}

\bibitem[Vas90]{vassiliev}
V.~A. Vassiliev, \emph{Cohomology of knot spaces}, Theory of singularities and
  its applications, Adv. Soviet Math., vol.~1, Amer. Math. Soc., Providence,
  RI, 1990, pp.~23--69. \MR{1089670 (92a:57016)}

\end{thebibliography}

\end{document}